\newtheorem{theorem}{Theorem}[section]
\newtheorem{cor}[theorem]{Corollary}
\newtheorem{prop}[theorem]{Proposition}
\theoremstyle{definition}
\newtheorem{example}[theorem]{Example}
\newtheorem{defi}[theorem]{Definition}
\newtheorem{rem}[theorem]{Remark}
\numberwithin{equation}{section}
\DeclareMathOperator{\Hom}{Hom}
\DeclareMathOperator{\End}{End}
\DeclareMathOperator*{\tw}{tw}
\newcommand{\noproof}{\begin{flushright} \ensuremath{\square}
\end{flushright}}
\def\ground{\mathbf{k}}
\def\C{\mathcal{C}}
\def\twC{\operatorname{tw}(\C)}
\def\id{\operatorname{id}}
\thanks{}
\begin{document}
\begin{abstract}
We introduce a certain differential graded bialgebra, neither commutative nor cocommutative, that governs perturbations of a differential on complexes supplied with an abstract Hodge decomposition. This leads to a conceptual treatment of the Homological Perturbation Lemma and its multiplicative version. As an application we give an explicit form of the decomposition theorem for $A_\infty$ algebras and $A_\infty$ modules and, more generally, for twisted objects in differential graded categories.
\end{abstract}
\title[On the perturbation algebra]{On the perturbation algebra}
\author{J. Chuang  \and A.~Lazarev}
\thanks{This work was partially supported by EPSRC grants EP/N015452/1 and EP/N016505/1}
\address{Centre for Mathematical Science\\City University\\London
EC1V 0HB\\UK}
\email{J.Chuang@city.ac.uk}
\address{University of Lancaster\\ Department of
Mathematics and Statistics\\Lancaster, LA1 4YF, UK.}
\email{a.lazarev@lancaster.ac.uk} \keywords{Abstract Hodge decomposition, differential graded algebra, Maurer-Cartan element} \subjclass[2010]{18D50, 17B55, 17B66, 16E45}

\maketitle
\tableofcontents
\section{Introduction}
The Homological Perturbation Lemma (HPL) is a kind of a homotopy transfer theorem: it allows one to transfer a perturbed differential of a (co)chain complex onto another one, that is a strong deformation retract of it, see \cite{Cra} and references therein. One interesting feature of HPL is that it gives explicit universal formulae, as formal power series in the original strong deformation retract data. The standard proof of HPL is based on algebraic manipulations with certain linear operators and we introduce a certain algebra $A$ that captures the properties of these operators, leading to an abstract version of HPL, of which the ordinary HPL is a consequence. It should be noted that in \cite{BL} the term `Perturbation Algebra' is reserved for a different, though closely related, algebra, that is not differential graded (the differential is viewed as its element rather than an operator). It is possible that our results could be recovered in the framework of \cite{BL}, but we found that working with the smaller differential graded object is more natural.

The category of modules over $A$ is, essentially, the category of strong homotopy retractions (in fact, we find it more convenient to work with an equivalent notion of an abstract Hodge decomposition) with a perturbed differential. This category is non-symmetric monoidal, which corresponds to a certain comultiplication on ${A}$ turning it into a noncommutative and noncocommutative bialgebra. We introduce a certain localization $\hat{A}$ of $A$ which turns out also to be a bialgebra. This bialgebra, despite having a simple presentation by very few generators and relations, exhibits a surprisingly rich structure; it is a central object of study in this paper. We discover a certain remarkable endomorphism of  $\hat{A}$ that leads to a strengthened version of the HPL, as well as its multiplicative version~\cite{HK}. As a corollary, we obtain a decomposition theorem for $A_\infty$ algebras: every such algebra is isomorphic to the direct sum of a minimal $A_\infty$ algebra (having zero cohomology) and a linear contractible one. A similar result is also obtained for $A_\infty$ modules over an $A_\infty$ algebra.  The structure maps for the minimal module as well as for the $A_\infty$ isomorphism are explicitly described in terms of summation over certain trees. 

\subsection{Notation and conventions} We work in the category of $\mathbb Z$-graded (dg) vector spaces over a fixed ground field $\ground$; an object in this category is a pair $(V,d_V)$ where $V$ is a graded vector space and $d_V$ is a differential on it; it will always be assumed to be of cohomological type (so it raises the degree of a homogeneous element). Unmarked tensor products will be understood to be taken over $\ground$; we will abbreviate `differential graded' to `dg'. The \emph{suspension} of a graded vector space $V$ is the graded vector space $\Sigma V$ so that $(\Sigma V)^i=V^{i+1}$.

The graded $\ground$-linear dual to a graded vector space $V$ is denoted by $V^*$; it is a \emph{pseudo-compact} vector space, i.e. an inverse limit of finite-dimensional (graded) vector spaces. Pseudo-compact vector spaces are endowed with the topology of the inverse limit and form a category where maps are required to be continuous. The category of pseudo-compact vector spaces is anti-equivalent to that of (discrete) vector spaces via linear duality; we refer to \cite{VdB} for a short account on pseudo-compact spaces. All results and constructions of this paper are valid when we work in the underlying category of pseudo-compact graded vector spaces.

A dg algebra is an associative monoid in the dg category of dg vector spaces with respect to the standard monoidal structure given by the tensor product. A dg vector space $V$ is a (left) dg module over a dg algebra $A$ if it is supplied with a dg map $A\otimes V\to V$ satisfying the usual conditions of associativity and unitality.

A Maurer-Cartan (MC) element in a dg algebra $A$ is an element $x$ of cohomological degree 1 satisfying the MC equation $d(x)+x^2=0$. It determines  a twisted differential in $A$ by the formula $d^x(a):=d(a)+[x,a]$ for  $a\in A$; the MC equation ensures that $(d^x)^2=0$. We will denote the dg algebra $A$ supplied with the differential $d^x$ by $A^x$. The MC element $x$ also determines a twisting of any dg $A$-module $(M,d_M)$; namely the differential $d+x$ (where $x$ is viewed as an operator on $M$) makes $M$ into a left dg $A^x$-module as could easily be checked; we will denote this dg $A$-mod by $M^{[x]}$.  The group $A_0^{\times}$ of invertible elements $\gamma$ of $A_0$ acts on the set of MC elements in $A$ by via the gauge action:
$\gamma\cdot x : =\gamma x \gamma^{-1}-d(\gamma)\gamma^{-1}.$

The paper is organized as follows. In Section \ref{abstractHD} we recall the definition of an abstract Hodge decomposition and introduce a dg bialgebra governing this structure. In  Section \ref{main} the Perturbation Algebra, our main object of study, is introduced and studied in some detail; this is, in fact a dg bialgebra, neither commutative nor cocommutative. We formulate and prove an abstract version of the HPL, of which the ordinary HPL is a corollary. We further strengthen HPL by proving that the perturbed differential is actually conjugate to a transferred one in a suitable sense. A decomposition theorem for twisted objects in a differential graded category is a consequence of the strengthened HPL. Specializing further, we obtain an explicit form of the decomposition theorem for $A_\infty$ modules.
\section{Category of abstract Hodge decompositions}\label{abstractHD}
In this section we will recall the definition of an abstract Hodge decomposition  on a dg vector space; this terminology was introduced in \cite{CL}.
\begin{defi}\label{def_H}
An abstract Hodge decomposition on a dg vector space $(V,d)$ is a pair of operators $t,s$  on $V$
$$t:V\to V \quad \text{and} \quad s:V\to V,$$
of degrees $0$ and $-1$ respectively,
 such that
\begin{enumerate}
\item $s^2=0$,
\item $sd+ds=1-t$,
\item $dt=td$,
\item $t^2=t$,
\item $st=ts=0$,
\end{enumerate}
We will also refer to the quadruple $(V,d,s,t)$ (or the triple $(V,s,t)$ if $d$ is clear from the context) as an abstract Hodge decomposition and use the shorthand HD for it.
\end{defi}
\begin{rem}
The notion of an HD could be reformulated as follows.  Let $U:=\operatorname{Im}(t)$ be the image of the operator $t$. It follows from the identity (3) above that the differential on $V$ restricts to a differential on $U$ and that the projection $t:V\to U$ and the inclusion $i:U\hookrightarrow V$ determine chain maps between the dg spaces $U$ and $V$. The conditions (2) and (4) above imply that $t\circ i=\id_U$ and that $i\circ t$ is chain homotopic to the identity on $V$. Thus, the pair of dg vector spaces $V$ and $U$ constitute a strong deformation retraction data. The condition $s^2=0$, the so-called \emph{side condition}, can always be imposed at no cost. It is more convenient for us to view the above as the structure on a single space $V$.
\end{rem}
Let $(V, t_V, s_V)$ and $(W,t_W,s_W)$ be dg vector spaces with the choice of an HD. Then the operators $t\otimes t$ and $s\otimes 1+t\otimes s$ determine an HD on the vector space $V\otimes W$. A straightforward inspection shows that this turns the category of HDs into a (non-symmetric) monoidal category. We will denote this category by $\mathcal H$.

The content of an HD is captured by a certain dg bialgebra. The following result is obvious.
\begin{prop}
Let $H$ be an associative algebra spanned by three vectors $s,t$ and $1$, with the relations $st=ts=s^2=0$ and $t^2=t$ and the differential $d(s)=1-t$, $d(t)=0$. There is a coassociative coproduct $\Delta:H\to H\otimes H$ so that $\Delta(t)=t\otimes t$ and $\Delta(s)=s\otimes 1+t\otimes s$ and a counit $\epsilon:H\to \ground: \epsilon(t)=1, \epsilon(s)=0$; these turn $H$ into a bialgebra. Furthermore, the monoidal categories of $dg$ $H$-modules and of HDs are isomorphic.
\end{prop}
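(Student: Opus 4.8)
The plan is to verify the three separate assertions in turn: first that the given structure maps make $H$ a well-defined dg bialgebra, and then that its category of dg modules is monoidally isomorphic to $\mathcal H$. Since the proposition is asserted to be ``obvious,'' the work is bookkeeping, and the main task is to organize the compatibility checks so that each condition of Definition~\ref{def_H} reappears naturally. I would begin by confirming that $H$ is a well-defined dg algebra: the multiplication table on the basis $\{1,t,s\}$ is forced by $t^2=t$, $st=ts=s^2=0$, and I would check that the differential $d(1)=0$, $d(t)=0$, $d(s)=1-t$ is a derivation of degree $+1$ satisfying $d^2=0$. The only nontrivial instances of the Leibniz rule involve products with $s$; for example, $d(s\cdot s)$ must vanish, and indeed $d(s)s\pm s\,d(s)=(1-t)s\mp s(1-t)=s-ts\mp(s-st)=0$ using $ts=st=0$, so the relation $s^2=0$ is compatible with $d$.

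Next I would check that $\Delta$ and $\epsilon$ are morphisms of dg algebras, which is exactly what is needed for $H$ to be a bialgebra. Coassociativity and counitality of $\Delta$ on the generators are immediate: $t$ is grouplike and $s$ is $(t,1)$-primitive in the appropriate skew sense, so $(\Delta\otimes\id)\Delta(s)=(\id\otimes\Delta)\Delta(s)=s\otimes 1\otimes 1+t\otimes s\otimes 1+t\otimes t\otimes s$. Multiplicativity of $\Delta$ requires that the relations be preserved, e.g.\ $\Delta(s)^2=0$ and $\Delta(s)\Delta(t)=\Delta(t)\Delta(s)=0$; expanding $\Delta(s)^2=(s\otimes 1+t\otimes s)^2$ and applying the Koszul sign rule together with $s^2=ts=st=0$ gives $0$, and similarly for the mixed relations. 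Finally $\Delta$ must be a chain map, i.e.\ $\Delta\, d=(d\otimes\id\pm\id\otimes d)\Delta$; the only check is on $s$, where $\Delta(d(s))=\Delta(1-t)=1\otimes 1-t\otimes t$ must equal $d(s\otimes 1+t\otimes s)=(1-t)\otimes 1+t\otimes(1-t)=1\otimes 1-t\otimes t$, which matches.

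For the final claim I would construct the isomorphism of monoidal categories explicitly. Given a dg $H$-module $V$, the action of $t$ and $s$ supplies operators on $V$, and the defining algebra relations of $H$ translate \emph{verbatim} into conditions (1), (4), (5) of Definition~\ref{def_H}, while the compatibility of the action with the differential, applied to $s$, yields $d(s\cdot v)=d(s)\cdot v - s\cdot d(v)=(1-t)\cdot v - s\,d(v)$, i.e.\ exactly condition (2) $sd+ds=1-t$; condition (3) $dt=td$ comes from the chain-map condition applied to $t$ together with $d(t)=0$. Conversely an HD furnishes an $H$-action, and these assignments are mutually inverse on objects and the identity on morphisms. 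The essential point is that the coproduct $\Delta$ was designed precisely so that the induced $H$-module structure on a tensor product $V\otimes W$ of dg $H$-modules reproduces the tensor HD $t\otimes t$ and $s\otimes 1+t\otimes s$ described before the proposition; I would verify that the monoidal structures correspond by comparing $\Delta$ applied to $s$ and $t$ with those formulas, and checking that the unit object $\ground$ carries the trivial HD. The main obstacle, such as it is, lies in keeping track of Koszul signs in the multiplicativity and chain-map verifications for $\Delta$, since $s$ has odd degree; but these signs work out, and no genuine difficulty arises.
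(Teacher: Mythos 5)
Your verification is correct and is exactly the routine check the paper has in mind: the paper states this proposition with no proof at all (it is declared obvious), and your computations --- compatibility of $d$ with the relations via the Koszul-signed Leibniz rule, multiplicativity and coassociativity of $\Delta$, the chain-map condition $\Delta(d(s))=1\otimes 1-t\otimes t$, and the translation of the $H$-module axioms into conditions (1)--(5) of Definition~\ref{def_H} with the coproduct reproducing the tensor HD $t\otimes t$, $s\otimes 1+t\otimes s$ --- fill in precisely the omitted bookkeeping. One small caution: in your check that $d(s^2)=0$ the hedged $\pm$ sign is not optional, since the plus branch would give $2s\neq 0$; only the Koszul-correct sign $d(s)s-s\,d(s)$ works, so the sign should be stated definitely.
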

\noproof
\begin{rem}\label{Bialgebra_H}\
\begin{enumerate}
\item
The bialgebra $H$ lacks an antipode since the grouplike element $t$ is a zero divisor, hence non-invertible.
\item
The dg abelian group $H$ is isomorphic to the normalized simplicial chain complex of the unit interval $[0,1]$ with two 0-simplices and one 1-simplex; morever, the coalgebra structure corresponds to Alexander-Whitney map; this was observed in \cite{SS}, pp 503-504. The commutative product on $H$ corresponds to the ordinary multiplication on $[0,1]$.
\end{enumerate}
\end{rem}
\begin{defi}\label{Hodge_alg}
We will call a monoid in $\mathcal H$ a \emph{multiplicative} HD.
\end{defi}
Recall that given two endomorphisms $f,g\in\End(A)$ of an associative algebra $A$, a linear map $s:A\to A$ is called an $(f,g)$-derivation, if for any $a,b\in A$ we have the `$(f,g)$-Leibniz rule':  $s(ab)=s(a)g(b)\pm f(a)s(b)$ where the sign is determined by the standard Koszul rule. Note that $f-g$ is always  both an $(f,g)$-derivation and a $(g,f)$-derivation.  We can give the following characterization of multiplicative HDs:
\begin{prop}
A dg algebra $A$ is a multiplicative HD if its underlying dg vector space is supplied with operators $s$ and $t$ satisfying the conditions of Definition \ref{def_H}, and such that $t$ is an algebra endomorphism of $A$ and $s$ is a $(t,\id)$-derivation.
\end{prop}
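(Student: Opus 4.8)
The plan is to unwind the definition of a monoid in the monoidal category $\mathcal H$ and to check that the stated conditions are exactly what is needed for the structure maps of the dg algebra $A$ to become morphisms of HDs. By the preceding proposition, $\mathcal H$ is isomorphic to the category of dg $H$-modules, so a morphism of HDs is just a chain map commuting with the actions of $t$ and $s$. A monoid in $\mathcal H$ is therefore an object $(A,d,t,s)$ of $\mathcal H$ equipped with a multiplication $\mu\colon A\otimes A\to A$ and a unit $\eta\colon\ground\to A$, both morphisms in $\mathcal H$, subject to the usual associativity and unitality axioms. Since $A$ is a dg algebra, $\mu$ and $\eta$ are already chain maps satisfying these axioms, and the conditions of Definition \ref{def_H} already make $(A,d,t,s)$ an object of $\mathcal H$. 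Hence everything reduces to verifying that $\mu$ and $\eta$ are compatible with $t$ and $s$ relative to the monoidal HD structure.

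First I would treat the multiplication. Recall that the HD on $A\otimes A$ is given by $t\otimes t$ and $s\otimes 1+t\otimes s$. Compatibility of $\mu$ with the $t$-operators, $\mu\circ(t\otimes t)=t\circ\mu$, reads $t(ab)=t(a)t(b)$ on elements, which is precisely the statement that $t$ is an algebra endomorphism. Compatibility with the $s$-operators, $\mu\circ(s\otimes 1+t\otimes s)=s\circ\mu$, spelled out on homogeneous elements $a,b$ becomes $s(ab)=s(a)b+(-1)^{|a|}t(a)s(b)$, which is exactly the $(t,\id)$-Leibniz rule. So $\mu$ is a morphism of HDs if and only if $t$ is an algebra endomorphism and $s$ is a $(t,\id)$-derivation.

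It then remains to handle the unit. The monoidal unit of $\mathcal H$ is $\ground$ with $t=\id$ and $s=0$, and for $\eta$, $\eta(1)=1_A$, to be a morphism of HDs one needs $t(1_A)=1_A$ and $s(1_A)=0$. The former holds because an algebra endomorphism preserves the unit; the latter then follows from the HD relation $st=0$ applied to $1_A$, since $s(1_A)=s(t(1_A))=0$, so no hypothesis on the characteristic of $\ground$ is required. Assembling these equivalences gives the claim. I expect the only delicate point to be matching the Koszul sign produced by the $s$-operator of the monoidal structure on $\mathcal H$ with the sign appearing in the definition of an $(f,g)$-derivation; a direct computation on homogeneous elements, as indicated above, shows they coincide. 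Reading the same equivalences in the reverse direction yields the converse, so the conditions genuinely characterize multiplicative HD structures on $A$.
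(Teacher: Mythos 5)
Your proof is correct and takes essentially the same route as the paper's, which likewise reduces the statement to the multiplication map $A\otimes A\to A$ being an $H$-module map and reads off the conditions on $t$ and $s$ from the comultiplication in $H$. The only difference is that you spell out the Koszul sign in the $(t,\id)$-Leibniz rule and verify the unit condition explicitly (correctly noting that $t(1_A)=1_A$ and $s(1_A)=s(t(1_A))=0$ follow automatically), details the paper leaves implicit.
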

\begin{proof}
The condition that $A$ is a multiplicative HD is equivalent to the multiplication map $A\otimes A\to A$ being an $H$-module map. Taking into account the comultiplication in $H$ we see that the latter is equivalent to $t$ being an algebra map and $s$ being a $(t,\id)$-derivation.
\end{proof}
\begin{example}\label{tensor}
Let $(V, s, t)$ be an HD. Then the tensor algebra $T(V):=\bigoplus_{n=0}^\infty V^{\otimes n}$ is a multiplicative HD where $t$ is extended to an algebra endomorphism $t_{T(V)}$ of $T(V)$ component-wise, $s_{T(V)}$ is extended from $s$ by the $(t_{T(V)},\id)$-Leibniz rule and $d_{T(V)}$ is extended from $d$ by the ordinary Leibniz rule. Indeed, the identities \[s^2_{T(V)}=t_{T(V)}s_{T(V)}=s_{T(V)}t_{T(V)}=0 \quad \text{and} \quad t^2_{T(V)}=t_{T(V)}\] are immediate. To see that $d_{T(V)}(s_{T(V)})=\id-t_{T(V)},$ note that both sides of the last identity are $(t_{T(V)},\id)$-derivations and so it is sufficient to check on $V$ where it is clear.
\end{example}
\begin{rem}\label{pseudo}
We considered the notion of an abstract Hodge decomposition in the category of dg vector spaces; one can define it, completely analogously, in the category of pseudo-compact dg vector spaces. The category of pseudo-compact dg vector spaces is symmetric monoidal, and a monoid in this category is, essentially, the same as a dg coalgebra by linear duality. There is, then, the notion of a pseudo-compact multiplicative HD analogous to Definition \ref{Hodge_alg}. We will skip the (rather obvious) details.
\end{rem}
\section{The perturbation algebra}\label{main}
\subsection{Definition and first properties} We will now introduce our main object of study: the perturbation dg algebra $A$ as well as its localization $\hat{A}$.
\begin{defi}
The perturbation algebra $A$ is the associative dg bialgebra generated by the elements $s,t$ and $x$ of degrees $-1,0$ and $1$ respectively, subject to the relations
\begin{align*}s^2&=st=ts=0\\ t^2&=t.
\end{align*}
The differential on $A$ is specified by
\begin{align*}&d(s)=1-t;\quad d(t)=0\\ &d(x)=-x^2.
\end{align*}
The comultiplication on $A$ is specified by
\begin{align*}&\Delta(s)=s\otimes 1+t\otimes s;\\
&\Delta(t)=t\otimes t;\\
&\Delta(x)=x\otimes 1+1\otimes x.\end{align*}
Additionally, the bialgebra $A$ possesses a counit:
\begin{align*}&\epsilon(s)=\epsilon(x)=0;\\ &\epsilon(t)=1.\end{align*}
\end{defi}
Thus, $A$ is obtained from $H$ by freely adjoining the primitive element $x$ satisfying the MC equation $dx=-x^2$. Just as $H$, $A$ does not possess an antipode. Nevertheless, there exists a (unique, involutory) dg algebra anti-automorphism $\rho:A\to A$ fixing $s$ and $t$, and sending $x$ to $-x$; using $\rho$ one can define an $A$-module structure on a $\ground$-linear dual to an $A$-module. Rather than being a coalgebra anti-automorphism like a Hopf algebra antipode, $\rho$ is a coalgebra automorphism; consequently, given $A$-modules $V$ and $W$, the standard isomorphism of dg vector spaces
$(V\otimes W)^*\cong V^*\otimes W^*$ is an isomorphism of $A$-modules.

The algebra $A$ governs `perturbations' of the differential on a given dg vector space supplied with the choice of an HD. More precisely, we have the following result.
\begin{prop}
Let $(V, d_V, s,t)$ be an HD and $x$ be an endomorphism of $V$ of degree 1 and such that $(d_V+x)^2=0$. Then $V$ is a dg module over $A$; conversely any dg $A$-module is of the kind just described.
\end{prop}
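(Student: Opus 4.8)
The plan is to read the statement as a dictionary and to match each ingredient of a dg $A$-module structure against the conditions of Definition \ref{def_H} together with the perturbation equation. Since $A$ is presented by the three generators $s,t,x$ subject only to the relations $s^2=st=ts=0$ and $t^2=t$, a module over the underlying associative algebra of $A$ is exactly a graded vector space $V$ equipped with three operators, which I again denote by $s,t,x$, of degrees $-1,0,1$ satisfying precisely those relations (the unit of $A$ acting as $\id_V$). These relations reproduce verbatim conditions (1), (4) and (5) of Definition \ref{def_H}, so the purely algebraic part of the structure is tautologically the same on both sides of the claimed equivalence.

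The remaining content is the compatibility of the action with the differentials, and the first reduction I would make is to check it only on generators. For a left dg module the structure map $\mu\colon A\otimes V\to V$ must be a chain map, which unwinds to
\[
d_V(a\cdot v)=d_A(a)\cdot v+(-1)^{|a|}\,a\cdot d_V(v)
\]
for all homogeneous $a\in A$ and $v\in V$. A one-line induction, using that $d_A$ is a derivation and that the action is associative, shows that if this identity holds for two homogeneous elements $a,b$ then it holds for their product $ab$; hence it is enough to verify it for $a\in\{s,t,x\}$.

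Next I would evaluate the three generator checks. Taking $a=t$ and $d_A(t)=0$ yields $d_Vt=td_V$, that is, condition (3). Taking $a=s$, using $d_A(s)=1-t$ and the sign $(-1)^{|s|}=-1$, gives $sd_V+d_Vs=1-t$, that is, condition (2). Taking $a=x$, using $d_A(x)=-x^2$ and $(-1)^{|x|}=-1$, gives $d_Vx+xd_V+x^2=0$; since $d_V^2=0$ this is exactly $(d_V+x)^2=0$. Each of these equivalences is reversible, so the two directions of the proposition follow simultaneously: a dg $A$-module produces an HD on $V$ together with a degree-$1$ operator $x$ satisfying the perturbation equation, and conversely such data assemble into an $A$-action whose compatibility with $d_V$ is guaranteed precisely by the three computations above.

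The proposition is therefore essentially bookkeeping, and the only place that genuinely demands care is the tracking of the Koszul signs in the $s$ and $x$ computations, where the factor $(-1)^{|a|}=-1$ is exactly what turns the graded chain-map identity into the symmetric expressions $sd_V+d_Vs=1-t$ and the square $(d_V+x)^2$. Beyond this sign check and the routine inductive reduction to generators I expect no real obstacle.
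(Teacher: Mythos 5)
Your proposal is correct and is essentially the paper's own argument: the paper packages the same content as the observation that an action is a map of dg algebras $A\to\End(V)$ (with differential $[?,d_V]$), which amounts exactly to your generator-by-generator check that the relations and the differentials $d(t)=0$, $d(s)=1-t$, $d(x)=-x^2$ translate into conditions (2), (3) of Definition \ref{def_H} and the equation $(d_V+x)^2=0$. The sign bookkeeping and the reduction to generators are handled correctly, so there is nothing to add.
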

\begin{proof}
We just need to note that the equation $(d_V+x)^2=0$ is equivalent to $[d_V,x]+x^2=0$. Thus, viewing the operators $x,s$ and $t$ as elements in $\End(V)$ determines a map of dg algebras $A\to \End(V)$ where $\End(V)$ is supplied with the differential $[?,d_V]$. This map is equivalent to giving $V$ the structure of an $A$-module.
\end{proof}
We also have a multiplicative version of this result; the proof is obvious.
\begin{prop}
Let $(V,d_V s,t)$ be a multiplicative HD and $x$ be a derivation of $V$ of degree 1 and such that $(d_V+x)^2=0$. Then $V$ is a dg algebra over $A$ (i.e. the multiplication map $V\otimes V\to V$ is a dg $A$-module map); conversely any dg $A$-algebra is of the kind just described.
\end{prop}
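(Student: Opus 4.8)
The plan is to recognize this statement as the precise multiplicative analogue of the preceding proposition, obtained by transporting each structural identity through the comultiplication $\Delta$ of $A$. First I would observe that, because $A$ is a bialgebra, the tensor product $V\otimes V$ of dg $A$-modules is again a dg $A$-module, with the generators acting through $\Delta$; explicitly, $s$ acts as $s\otimes 1+t\otimes s$, the element $t$ acts as $t\otimes t$, and $x$ acts as $x\otimes 1+1\otimes x$. By the preceding proposition, endowing $V$ with a dg $A$-module structure is the same as supplying the HD data $(s,t)$ together with a degree-one endomorphism $x$ satisfying $(d_V+x)^2=0$. The only remaining content is to characterize when the multiplication $\mu\colon V\otimes V\to V$ is a morphism of dg $A$-modules.

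The key step is to unwind the condition that $\mu$ intertwines the actions of the three generators. Commuting with $t$ reads $\mu\circ(t\otimes t)=t\circ\mu$, i.e. $t$ is an algebra endomorphism. Commuting with $s$ reads $\mu\circ(s\otimes 1+t\otimes s)=s\circ\mu$, which is exactly the assertion that $s$ is a $(t,\id)$-derivation. Commuting with $x$ reads $\mu\circ(x\otimes 1+1\otimes x)=x\circ\mu$, i.e. $x$ is a derivation. Together with the compatibility of $\mu$ with $d_V$ — which is nothing but the statement that $(V,d_V)$ is a dg algebra — these three identities are precisely the hypotheses of the proposition, once one invokes the earlier characterization of multiplicative HDs, which already packages the first two conditions into the single notion of a multiplicative HD.

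The converse requires no further work: reading the same three equivalences backwards shows that any dg $A$-algebra yields a multiplicative HD equipped with a degree-one derivation $x$ satisfying $(d_V+x)^2=0$. I do not anticipate a genuine obstacle here; the one point deserving care is the bookkeeping of Koszul signs in the $(t,\id)$-Leibniz rule, so that the tensor action $s\otimes 1+t\otimes s$ matches the sign convention built into the definition of a $(t,\id)$-derivation. It is this clean dictionary between $\Delta$ and the Leibniz-type identities that makes the proof, as the authors note, essentially a matter of inspection.
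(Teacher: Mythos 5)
Your proposal is correct and matches the paper's intended argument: the paper declares this proof obvious precisely because, as you show, it is the extension of the earlier characterization of multiplicative HDs (where intertwining with $\Delta$ on $s$ and $t$ yields the $(t,\id)$-derivation and algebra-endomorphism conditions) by the single additional observation that primitivity of $x$ makes intertwining with $x$ equivalent to $x$ being a derivation. Nothing further is needed.
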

\noproof
We will also be interested in the dg algebra $\hat{A}$ obtained from $A$ by formally inverting the elements $1+sx$ and $1+xs$. In fact, invertibility of one of these elements implies that the other one is also invertible, e.g. $(1+xs)^{-1}=x(1+sx)^{-1}s+1$ as was observed in \cite[Remark 2.4]{Berg}.  Then $\hat{A}$ turns out to be a dg bialgebra (despite $1+sx$ and $1+xs$ not being grouplike). The dg bialgebra $\hat{A}$ could be realized as a dg subbialgebra of the completion of $A$ at the two-sided ideal generated by $x$. The inverses of $1+sx$ and $a+xs$ are represented by power series $\sum_{n=0}^{\infty}(-1)^n(sx)^n$ and $\sum_{n=0}^{\infty}(-1)^n(xs)^n$ respectively. We will denote these inverses by $\alpha:=(1+sx)^{-1}$ and $\beta:=(1+xs)^{-1}$.

\subsection{Abstract Perturbation Lemma and its consequences} We will now delve a little deeper in the structure of the algebra $\hat{A}$, and, in particular, prove an abstract version of the Homological Perturbation Lemma.

Note the following useful identities, to be used repeatedly in calculations:
\begin{equation}\label{easy}\begin{split}
\alpha s &=s\beta, \quad
x\alpha = \beta x,\\
s\alpha &=s, \quad
\beta s = s, \\
 t\alpha &= t, \quad \beta t= t,\\
\beta\alpha &=\alpha + \beta -1.
 \end{split}
\end{equation}
These are all obvious except, perhaps, the last one. To deduce it, note that $\alpha-1=-sx\alpha$ and
 $\beta-1=-\beta xs$ from which it follows that $(\beta-1)(\alpha-1)=0$ or
 $\beta\alpha= \alpha + \beta -1$.
Furthermore, the following identities hold:
\begin{equation}\label{easy'}\begin{split}
d(\alpha) = (\alpha t - 1)x\alpha, \\ d(\beta)= \beta x (1-t \beta).\end{split}
\end{equation}
The first formula above is given by the calculation
\[d(\alpha)=-\alpha d(\alpha^{-1}) \alpha =	-\alpha ((1-t)x-s(-x^2))\alpha = \alpha  (t-\alpha^{-1}) x\alpha,\]
 and the second formula by a similar calculation, or by using the anti-involution $\rho$.

\begin{prop}\label{phi} There exists a unique algebra automorphism
	$$\phi:\hat{A} \to \hat{A}$$
	such that
\begin{align*}\phi(s)&= \alpha s =s\beta, \\ \ \phi(t)&=\alpha t \beta, \quad \phi(x)=-x, \\
 \phi(\alpha)&=\alpha^{-1}, \quad \phi(\beta)=\beta^{-1}
\end{align*}
	It is an involution commuting with $\rho$. Moreover, there is a dg bialgebra structure on $\hat{A}$ extending that on $A$; with this structure $\phi$ is a bialgebra automorphism.
	\end{prop}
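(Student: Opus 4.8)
The plan is to exploit that $\hat A$ is a localization of $A$: every claim will be reduced to the generators $s,t,x$, where it becomes a manipulation of the identities \eqref{easy} and \eqref{easy'}. To construct $\phi$, I first set $\phi(s)=\alpha s,\ \phi(t)=\alpha t\beta,\ \phi(x)=-x$ and verify the defining relations of $A$; for instance $\phi(s)^2=\alpha(s\alpha)s=0$ and, using $\beta\alpha=\alpha+\beta-1$ together with $t\alpha=t,\ \beta t=t$, one gets $\phi(t)^2=\phi(t)$. This defines an algebra map $A\to\hat A$. By the universal property of localization it descends to $\hat A$ provided the images of the inverted elements are invertible, and a direct computation gives $\phi(1+sx)=1-\alpha sx=\alpha$ and $\phi(1+xs)=1-x\alpha s=\beta$; hence $\phi$ extends uniquely to an algebra endomorphism with $\phi(\alpha)=\alpha^{-1},\ \phi(\beta)=\beta^{-1}$. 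Because an algebra endomorphism is determined on generators, the evident relations $\phi^2(s)=s,\ \phi^2(t)=t,\ \phi^2(x)=x$ give $\phi^2=\id$, so $\phi$ is an involutive automorphism. Commutation with $\rho$ is once more a generator check: $\rho$ extends to $\hat A$ and interchanges $\alpha,\beta$ (it sends $1+sx$ to $1+xs$), after which $\phi\rho(s)=\alpha s=s\beta=\rho\phi(s)$, $\phi\rho(t)=\alpha t\beta=\rho\phi(t)$ and $\phi\rho(x)=x=\rho\phi(x)$.

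Next I will extend the dg bialgebra structure to $\hat A$. The differential extends automatically and uniquely to any localization of a dg algebra, with $d(\alpha),d(\beta)$ given by \eqref{easy'}, and the counit extends by $\epsilon(\alpha)=\epsilon(\beta)=1$. The one substantial point is the invertibility of $\Delta(1+sx)$ and $\Delta(1+xs)$ in $\hat A\otimes\hat A$, so that $\Delta$ descends. Writing $\sigma=\Delta(s)=s\otimes1+t\otimes s$ and $\Delta(x)=x\otimes1+1\otimes x$, the trivial identity
\[1+\sigma(\xi+\eta)=(1+\sigma\xi)\bigl(1+(1+\sigma\xi)^{-1}\sigma\eta\bigr),\]
applied with $\xi=x\otimes1,\ \eta=1\otimes x$, factors $\Delta(1+sx)$ into two pieces. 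The first, $1+\sigma(x\otimes1)=\alpha^{-1}\otimes1-tx\otimes s$, is invertible because the correction $tx\otimes s$ is square-zero (as $s^2=0$), giving $(1+\sigma(x\otimes1))^{-1}=\alpha\otimes1+\alpha t\beta x\otimes s$. The second piece reduces to $1+\alpha s\otimes x+\alpha t\beta\otimes sx$, and is invertible because its powers collapse to $\alpha t\beta\otimes(sx)^n$ for $n\ge2$. Multiplying the two inverses exhibits $\Delta(\alpha)$---and, symmetrically, $\Delta(\beta)$---as explicit elements of $\hat A\otimes\hat A$. Coassociativity, counitality and the chain-map property of $\Delta$ then hold on all of $\hat A$ since they hold on $A$ and the locus of agreement of two such maps is a subalgebra containing $A$ and closed under the relevant inverses.

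It then remains to show $\phi$ respects the coalgebra structure. Preservation of the counit is immediate on generators. Since $\Delta\phi$ and $(\phi\otimes\phi)\Delta$ are both algebra maps $\hat A\to\hat A\otimes\hat A$, it suffices to compare them on $s,t,x$; the case of $x$ is trivial, and those of $s$ and $t$ are exactly the identities
\[\Delta(\alpha)\,(s\otimes1+t\otimes s)=\alpha s\otimes1+\alpha t\beta\otimes\alpha s,\qquad \Delta(\alpha)\,(t\otimes t)\,\Delta(\beta)=\alpha t\beta\otimes\alpha t\beta.\]
I would verify these by substituting the explicit forms of $\Delta(\alpha),\Delta(\beta)$ and simplifying with \eqref{easy}; in the first, the terms recombine using $\beta xs=1-\beta$, $\beta s=s$ and $s\beta=\alpha s$.

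The genuine obstacle is the coproduct half of the last step: producing the closed form of $\Delta(\alpha)$ (equivalently, the invertibility of $\Delta(1+sx)$ in $\hat A\otimes\hat A$) and then checking the grouplike-type relation $\Delta(\alpha)(t\otimes t)\Delta(\beta)=\alpha t\beta\otimes\alpha t\beta$, the one place where both tensor factors and the relation $\beta\alpha=\alpha+\beta-1$ must be controlled simultaneously; everything else is a routine generator check. Finally, it is worth noting that $\phi$ is \emph{not} a chain map---a short computation gives $d\phi(s)-\phi d(s)=\alpha+\beta-2\neq0$---so ``bialgebra automorphism'' here refers to the underlying, non-differential, bialgebra.
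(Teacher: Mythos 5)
Your proposal is correct, and it genuinely diverges from the paper at the one substantial step you identify. On the algebra side the two arguments coincide: your identities $\phi(1+sx)=\alpha$ and $\phi(1+xs)=\beta$ are exactly the paper's computations $\alpha^{-1}\phi(\alpha^{-1})=1$ and $\phi(\beta^{-1})\beta^{-1}=1$, recast via the universal property of localization, and the involution and $\rho$-commutation checks are the same generator verifications. For the coproduct, you invert $\Delta(1+sx)$ directly in $\hat{A}\otimes\hat{A}$, and your two invertibility claims do check out: the first factor has inverse $\alpha\otimes 1+\alpha t\beta x\otimes s$ (square-zero correction, using $x\alpha=\beta x$), and the powers of $u=\alpha s\otimes x+\alpha t\beta\otimes sx$ collapse to $\alpha t\beta\otimes(sx)^n$ for $n\ge 2$ because $s\alpha=s$, $st=ts=0$ and $t\beta\alpha t=t$, yielding the finite closed form $(1+u)^{-1}=1-u+\alpha t\beta\otimes(\alpha-1+sx)$; you should also record the symmetric left-inverse computation (it goes through, using $t\beta\alpha s=0$). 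The paper avoids all of this explicit inversion: it embeds $\hat{A}$ into the $x$-adic completion $\tilde{A}$, where $\Delta$ extends topologically for free, verifies compatibility of $\phi$ with $\Delta$ on generators there --- economically, by multiplying the desired grouplike and skew-primitive identities through by the explicit element $\Delta(\alpha^{-1})\in A\otimes A$ rather than ever computing $\Delta(\alpha)$ --- and then \emph{defines} $\Delta(\alpha):=(\phi\otimes\phi)\Delta(\alpha^{-1})$, which visibly lies in $\hat{A}\otimes\hat{A}$ since $\phi$ preserves $\hat{A}$; thus $\phi$ itself manufactures the restriction of $\Delta$, and the coalgebra property of $\phi$ and the existence of the coproduct on $\hat{A}$ come in one stroke. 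Your route buys self-containedness (no appeal to the embedding $\hat{A}\hookrightarrow\tilde{A}$, which the paper asserts without proof) and explicit coproduct formulas, at the price of heavier computations in both tensor factors, including the two displayed identities you leave to substitution; your locus-of-agreement argument for coassociativity and the chain-map property is sound provided you use, as you indicate, the derivation variant for the difference of $\Delta d$ and the tensor differential composed with $\Delta$ (closure under inverses relies on $\Delta(a)$ being invertible whenever $a$ is). Finally, your computation $d\phi(s)-\phi d(s)=\alpha+\beta-2$ is correct and matches the paper's remark immediately after the proposition that $\phi$ is not a dg endomorphism; Theorem \ref{abstract_lemma} gives $\phi d=d^x\phi$ instead, so the bialgebra automorphism is indeed one of the underlying non-differential bialgebra.
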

\begin{proof}
	If such an endomorphism taking the stated values exists, it is clearly an involutory automorphism that commutes with $\rho$. To check that such a $\phi$ is a well-defined algebra endomorphism,  we compute, using (\ref{easy})
\begin{align*}(\alpha s)(\alpha s)&=(\alpha s)(s\beta)=0, \\
(\alpha t \beta)^2 
&=\alpha t(\alpha+\beta-1)t\beta
=\alpha t \beta +\alpha t \beta-\alpha t \beta\\
&=\alpha t \beta,\\ (\alpha s) (\alpha t \beta)&=\alpha st\beta=0, \\
(\alpha t \beta) (s\beta)&=\alpha ts\beta=0,\\
\alpha^{-1}\phi(\alpha^{-1})&=\alpha^{-1}[1+(\alpha s)(-x)]\\
&=\alpha^{-1}-sx=1,\\
\phi(\beta^{-1})\beta^{-1}&=[1+(-x)(s\beta)]\beta^{-1}\\
&=\beta^{-1}-xs= 1.
\end{align*}
To deal with the comultiplication, let us embed $\hat{A}$ into $\tilde{A}$, the  completion of $A$ by the two-sided ideal generated by $x$. Note that $\tilde{A}$ is a (topological) dg bialgebra; it is topologically generated by $s,t$ and $x$ and the diagonal $\Delta:\tilde{A}\to \tilde{A}\hat{\otimes}\tilde{A}$ is continuous. Let us check compatibility of $\phi$ and $\Delta$ in $\tilde{A}$; it suffices to check it on $s,t$ and $x$.

Since $x$ and $\phi(x)=-x$ are both primitive, we just need to show that $\phi(t)=\alpha t\beta$ is group-like and $\phi(s)=\alpha s$ is skew-primitive with respect to the idempotent $\alpha t\beta$, i.e.
\begin{equation}\label{skew}\Delta(\alpha s)=\alpha s\otimes 1+\alpha t\beta\otimes \alpha s.
\end{equation} We have:
\begin{align*}
\Delta(\alpha^{-1})(\alpha t\otimes \alpha t)&=\Delta(1+sx)(\alpha t\otimes \alpha t)\\
&=[(1+sx)\otimes 1+s\otimes x-tx\otimes s+t\otimes sx](\alpha t\otimes \alpha t)\\
&=t\otimes\alpha t+t\otimes sx\alpha t\\
&=t\otimes(1+sx)\alpha t\\
&=t\otimes t.
\end{align*}
This shows that $\alpha t$ is grouplike. Similarly $t\beta$ is grouplike and then $\alpha t\beta=(\alpha t)(t\beta)$ is likewise grouplike. Next,
\begin{align*}
\Delta(\alpha^{-1})(\alpha s\otimes 1+\alpha t\beta\otimes\alpha s)&=[(1+sx)\otimes 1+s\otimes x-tx\otimes s+t\otimes sx](\alpha s\otimes 1+\alpha t\beta\otimes\alpha s)\\
&=s\otimes 1+t\beta\otimes\alpha s+tx\alpha s\otimes s+t\beta\otimes sx\alpha s\\
&=s\otimes 1+t\beta\otimes s+tx\alpha s\otimes s\\
&=s\otimes 1+t\otimes s.
\end{align*}
This implies (\ref{skew}) and shows that $\phi$ is a coalgebra endomorphism of $\tilde{A}$. Thus, $\phi\hat{\otimes}\phi\circ \Delta=\Delta\circ \phi$ and $\Delta=\phi\hat{\otimes}\phi\circ\Delta\circ \phi^{-1}$ on $\tilde{A}$. This allows one to restrict $\Delta$ on $\hat{A}$ by $\Delta({\alpha}):=(\phi\otimes\phi)\Delta(\alpha^{-1})$ and $\Delta({\beta}):=(\phi\otimes\phi)\Delta(\beta^{-1})$.
\end{proof}	
The endomorphism $\phi$ is, however, not a dg endomorphism of $\hat{A}$. Consider $\hat{A}^x$, the algebra $\hat{A}$ supplied with the twisted differential
$d^x=d+[x,?]$. The following result is an abstract version of the HPL.
\begin{theorem}\label{abstract_lemma}\
\begin{enumerate}
\item
We have the following equation of endomorphisms of $\hat{A}$:
\begin{equation}\label{com}\phi d = d^x \phi.\end{equation}
Thus $\phi$ defines isomorphisms of dg bialgebras:
\[\phi: \hat{A} \to \hat{A}^x,\]\[ \phi: \hat{A}^x\to \hat{A}.\]
\item
The map $tat\mapsto\alpha (tat)\beta$ determines an isomorphism of dg bialgebras
\[
(t\hat{A}t,d+tx\alpha t)\xrightarrow[]{\cong} ((\alpha t\beta )\hat{A}(\alpha t\beta), d^x)
\]
\end{enumerate}
\end{theorem}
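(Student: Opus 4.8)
The plan is to handle the two parts separately, in each case reducing a global operator identity on $\hat A$ to a finite check on the generators $s,t,x$ by exploiting the algebraic structure already in place.

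For part (1), everything follows once we establish the operator equation $\phi d=d^x\phi$. Indeed, $\phi$ is an involutive bialgebra automorphism by Proposition~\ref{phi}, so this equation exhibits $\phi$ as a chain map $(\hat A,d)\to(\hat A,d^x)=\hat A^x$; applying $\phi$ to both sides of $\phi d=d^x\phi$ and using $\phi^2=\id$ gives $d\phi=\phi d^x$, the reverse isomorphism. To prove the equation, I would note that both $\phi d$ and $d^x\phi$ are $(\phi,\phi)$-derivations (since $\phi$ is an algebra map and $d,d^x$ are derivations), and two $(\phi,\phi)$-derivations agreeing on an algebra generating set agree everywhere; hence it suffices to check $s,t,x$, their values on $\alpha,\beta$ then being forced by differentiating $\alpha(1+sx)=1$ and $(1+xs)\beta=1$. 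On $x$ both sides give $-x^2$; on $t$ one reduces $d^x(\alpha t\beta)=0$ using $d(\alpha),d(\beta)$ from (\ref{easy'}) together with $x\alpha=\beta x$; on $s$ one checks $d^x(\alpha s)=1-\alpha t\beta=\phi(d(s))$, the crux being the collapses $\alpha sx=1-\alpha$ and $\beta xs=1-\beta$. This last generator is where essentially all the computational content of part (1) resides.

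For part (2) I would first set up the two dg bialgebra structures. The element $e:=\alpha t\beta=\phi(t)$ is a grouplike idempotent (both facts being verified inside the proof of Proposition~\ref{phi}), and $d^x(e)=d^x\phi(t)=\phi(d(t))=0$ by part (1); so $(e\hat A e,d^x)$ is a dg bialgebra, being the corner of $\hat A^x$ at a $d^x$-closed grouplike idempotent. On the source side I would check that $\xi:=tx\alpha t$ is a Maurer--Cartan element of $(t\hat A t,d)$, a short computation giving $d(\xi)=-tx\alpha tx\alpha t=-\xi^2$, and that $\xi$ is \emph{primitive} in the corner bialgebra $t\hat A t$, i.e.\ $\Delta(\xi)=\xi\otimes t+t\otimes\xi$, which falls out of $\Delta(\alpha)(t\otimes t)=\alpha t\otimes\alpha t$ (grouplikeness of $\alpha t$) together with $t\alpha t=t$. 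Primitivity ensures that the twist $d^\xi=d+[\xi,-]$ stays a coderivation, so $(t\hat A t,d^\xi)$ is a dg bialgebra.

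It then remains to show $\Psi(y):=\alpha y\beta$ is an isomorphism of dg bialgebras $(t\hat A t,d^\xi)\to(e\hat A e,d^x)$. I would exhibit $\Psi$ as a Morita-style corner isomorphism induced by $u:=\alpha t$ and $v:=t\beta$: from (\ref{easy}) one gets $uv=\alpha t\beta=e$ and $vu=t\beta\alpha t=t(\alpha+\beta-1)t=t$, so $\Psi(y)=uyv$ is an algebra isomorphism onto $e\hat A e$ with inverse $z\mapsto vzu=t\beta z\alpha t$; multiplicativity and unitality are immediate from $uv=e$, $vu=t$. Compatibility with comultiplication reduces, using $\Delta(y)\in t\hat A t\otimes t\hat A t$ and the grouplikeness relations $\Delta(\alpha)(t\otimes t)=(\alpha\otimes\alpha)(t\otimes t)$ and $(t\otimes t)\Delta(\beta)=(t\otimes t)(\beta\otimes\beta)$, to $\Delta(\Psi y)=(\Psi\otimes\Psi)\Delta(y)$; the counit is preserved since $\epsilon(\alpha)=\epsilon(\beta)=1$. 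The one genuinely differential step, and the part I expect to be the main obstacle, is the chain-map identity $\Psi d^\xi=d^x\Psi$: expanding $d^x(\alpha y\beta)$ with (\ref{easy'}) and cancelling yields $\alpha tx\alpha y\beta+\alpha d(y)\beta-(-1)^{|y|}\alpha y\beta xt\beta$, while $\Psi(d^\xi y)$ equals $\alpha tx\alpha y\beta+\alpha d(y)\beta-(-1)^{|y|}\alpha yx\alpha t\beta$ after using $ty=yt=y$, and the two agree precisely because $tx\alpha=t\beta x$, i.e.\ the identity $x\alpha=\beta x$. Thus, as in part (1), the entire differential compatibility collapses to the localization identities (\ref{easy})--(\ref{easy'}).
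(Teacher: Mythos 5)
Your proposal is correct and takes essentially the same route as the paper: part (1) is reduced to checking on the generators $s,t,x$ via the derivation property of $\phi d$ and $d^x\phi$ (the paper phrases this as $\phi d\phi$ and $d^x$ both being derivations of $\hat{A}$), with the same computations resting on the identities (\ref{easy}) and (\ref{easy'}), while part (2) hinges on the same key identity $t\beta\alpha t=t$ and the same expansion of $d^x(\alpha y \beta)$. Your minor variants --- verifying $d^x(\alpha t\beta)=0$ directly rather than via the paper's $(d^x)^2=0$ trick, packaging multiplicativity through $u=\alpha t$, $v=t\beta$ with $uv=\alpha t\beta$ and $vu=t$, and spelling out the coalgebra compatibility and the Maurer--Cartan/primitivity properties of $\xi$ that the paper defers to Corollary~\ref{maurer} or leaves implicit --- are sound but do not change the substance of the argument.
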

\begin{proof}
Both $\phi d \phi$ and $d^x$ are derivations of $\hat{A}$, so it suffices to check (\ref{com}) on the generators $s$, $t$ and $x$.
We have
\begin{align*}\phi (d(x))&=\phi(-x^2)\\&= -(\phi(x))^2\\&= -x^2,
\end{align*} and similarly,
\begin{align*}d^x (\phi(x))&= -(-(-x)^2)+[x,-x]\\&=-x^2
\end{align*}
Using (\ref{easy}) and (\ref{easy'}) we obtain:
\begin{align*}
	d^x(\phi(s)) & = d^x (\alpha s) \\
	 & = (\alpha t  - 1)x\alpha s + \alpha (1-t) + x\alpha s + \alpha sx \\
	 & = \alpha t \beta xs + \alpha (1+sx-t) \\
	 & = 1-\alpha t(1-\beta xs) \\
	 & = 1-\alpha t \beta \\
	 & = \phi(d(s)).
 \end{align*}
Finally,
\begin{align*}d^x(\phi(t))&= d^x (1-\phi(d(s)))\\
&=d^x(1-d^x(\phi(s)))\\&=0=\phi(d(t)).
\end{align*}
This finishes the proof of the first claim. To prove the second, we note first that, since $t$ and $\alpha t\beta$ are grouplike idempotents, the spaces $t\hat{A}t$ and $\alpha t\beta\hat{A}\alpha t\beta$ are (graded) bialgebras. To see that they are multiplicatively isomorphic,  let $a,b\in\hat{A}$, then we have:
\begin{align*}
(\alpha tat\beta)(\alpha tat\beta)=&\alpha ta(t\beta\alpha t)at\beta\\
=&(\alpha ta)(tb\beta)
\end{align*}
as required. Next, since  $d^x(\alpha t\beta)=0$, the vector space and $\alpha t\beta \hat{A}\alpha t\beta$
is closed with respect to $d^x$ respectively and thus, is a dg algebra. Similarly, $t\hat{A}t$ is closed with
 respect to $d+tx\alpha t$.  The following calculation shows that the actions of $d^x$ and $d+tx\alpha t$
 are compatible (and thus, $d+tx\alpha t$ squares to zero in $t\hat{A}t$). Let $a\in \hat{A}$. We have:
\begin{align*}
d^x(\alpha tat\beta)&=d(\alpha tat\beta)+[x,\alpha tat\beta]\\
&=(\alpha t-1)x\alpha tat\beta+(-1)^{|a|}\alpha tat\beta x(1-t\beta)+\alpha td(a)t\beta+x\alpha tat\beta-(-1)^{|a|}\alpha tat\beta x\\
&=\alpha t(da+x\alpha ta-(-1)^{|a|}at\beta x)t\beta\\
&=\alpha t(da+[a,tx\alpha t])t\beta
\end{align*}
as claimed.
\end{proof}
\begin{cor}\label{maurer}\
\begin{enumerate}
\item
The element $\xi:=tx\alpha t(=t\beta x t)\in \hat{A}$ is an MC element in $\hat{A}$.
\item It is $t$-primitive, i.e. $\Delta(\xi)=\xi\otimes t+t\otimes \xi$.
\end{enumerate}
\end{cor}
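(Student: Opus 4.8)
The plan is to derive both assertions from the identities \eqref{easy}, \eqref{easy'} together with the grouplikeness properties established inside the proof of Proposition \ref{phi}; neither part needs the full strength of Theorem \ref{abstract_lemma}, and in fact both repackage the very computations underlying it. First note that $\xi=tx\alpha t$ has degree $1$, so the Maurer--Cartan condition to be checked in part (1) is $d(\xi)+\xi^2=0$.

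For part (1) I would compute $d(\xi)$ directly. Since $d(t)=0$ and $t$ has degree zero, the Leibniz rule collapses the outer factors, giving $d(\xi)=t\,d(x\alpha)\,t$. Expanding $d(x\alpha)=d(x)\alpha-x\,d(\alpha)$ and substituting $d(x)=-x^2$ together with the first formula of \eqref{easy'}, namely $d(\alpha)=(\alpha t-1)x\alpha$, the two $x^2\alpha$ contributions cancel and one is left with $d(x\alpha)=-x\alpha t x\alpha$. Hence $d(\xi)=-tx\alpha t x\alpha t=-\xi^2$, using $t^2=t$, which is exactly $d(\xi)+\xi^2=0$. This is consistent with, and essentially the same cancellation that forces, the operator $d+tx\alpha t$ of Theorem \ref{abstract_lemma}(2) to be a differential on $t\hat{A}t$.

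For part (2) the key idea is to regroup $\xi=t\cdot x\cdot(\alpha t)$ as a primitive element flanked by grouplike elements. Since $\Delta$ is multiplicative, $\Delta(\xi)=\Delta(t)\Delta(x)\Delta(\alpha t)$, where $\Delta(t)=t\otimes t$, $\Delta(x)=x\otimes 1+1\otimes x$, and $\Delta(\alpha t)=\alpha t\otimes\alpha t$; the last of these is precisely the grouplikeness of $\alpha t$ verified inside the proof of Proposition \ref{phi}. Multiplying out in $\hat{A}\hat{\otimes}\hat{A}$ and using the identity $t\alpha t=t$ (which follows from $t\alpha=t$ in \eqref{easy}) to simplify the ``diagonal'' factors $t\cdot\alpha t$, the two surviving cross terms collapse to $\xi\otimes t$ and $t\otimes\xi$, yielding $\Delta(\xi)=\xi\otimes t+t\otimes\xi$.

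The computations are short, so I do not expect a genuine obstacle. The only points requiring care are the bookkeeping of Koszul signs when multiplying in the graded tensor-product algebra $\hat{A}\hat{\otimes}\hat{A}$ in part (2), and applying the simplifications $t\alpha=t$ and $x\alpha=\beta x$ from \eqref{easy} in the correct places in part (1). Once $\alpha t$ is known to be grouplike and \eqref{easy'} is in hand, both statements fall out immediately.
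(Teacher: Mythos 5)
Your proof is correct. Part (2) is essentially identical to the paper's own argument: both factor $\Delta(\xi)=\Delta(t)\Delta(x)\Delta(\alpha t)$, invoke the grouplikeness of $\alpha t$ established inside the proof of Proposition \ref{phi}, and collapse the diagonal factors via $t\alpha t=t$ (no Koszul signs actually intervene, since $t$ and $\alpha t$ have degree $0$). For part (1), however, you take a genuinely different and more elementary route. The paper deduces the MC property from Theorem \ref{abstract_lemma}(2): the twisted operator $a\mapsto d(a)+[\xi,a]$ squares to zero on $t\hat{A}t$, so $\xi$ is MC there, and hence in $\hat{A}$ since $t\hat{A}t$ sits inside $\hat{A}$ as a dg subspace (a retract). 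You instead verify $d(\xi)+\xi^2=0$ by direct computation from $d(x)=-x^2$ and $d(\alpha)=(\alpha t-1)x\alpha$ of \eqref{easy'}: your Leibniz sign $d(x\alpha)=d(x)\alpha-x\,d(\alpha)$ is right because $|x|=1$, the two $x^2\alpha$ terms do cancel to give $d(x\alpha)=-x\alpha tx\alpha$, and $t^2=t$ then yields $d(\xi)=-tx\alpha tx\alpha t=-\xi^2$. Your version is self-contained and arguably tighter: strictly speaking, $(d^{\xi})^2=0$ only forces the curvature $d(\xi)+\xi^2$ to be \emph{central} in $t\hat{A}t$, so the paper's inference quietly rests on the same cancellation you carry out explicitly; what the paper's route buys in exchange is that the corollary falls out of the already-proved Theorem \ref{abstract_lemma} with no further computation.
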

\begin{proof}
We saw that the twisted differential $d^{\xi}:a\mapsto d_A(a)+[\xi a]$ squares to zero in $t\hat{A}t$ and it follows that $\xi$ is an MC element in the dg algebra $t\hat{A}t$. Since $t\hat{A}t$ is a retract of $\hat{A}$, the element $\xi$ is also MC in $\hat{A}$. Furthermore,
\begin{align*}
\Delta(\xi)&=\Delta(tx\alpha t)\\
&=(t\otimes t)(x\otimes 1+1\otimes x)(\alpha t\otimes \alpha t)\\
&=tx\alpha t\otimes t+tx\alpha t\otimes t\\
&=\xi\otimes t+t\otimes \xi.
\end{align*}
\end{proof}
The ordinary HPL is a consequence of the abstract one:
\begin{cor}\label{ordinary}\
\begin{enumerate}
\item Let $(V, d_V,s,t)$ be an HD and $x\in\End(V)$ is such that $[d,x]+x^2=0$. Additionally, we assume that the operators $1+sx$ and $1+xs$ act invertibly on $V$. Then
$(V,d_V+x,\alpha s,\alpha t\beta)$ is also an HD. If $(V,d_V,s,t)$ is a multiplicative HD and $x$ was a
derivation, then $(V,d_V+x,\alpha s,\alpha t\beta)$ is also a multiplicative HD.
\item
The operator $\alpha t\in \End(V)$ restricts to an isomorphism of dg $t\hat{A}t$-modules, which is multiplicative if the HD $(V,d_V,s,t)$ is multiplicative:
\[\alpha t: (tV, d_V+tx\alpha t)\to(\alpha t\beta V, d_V+x),\] where $t\hat{A}t$ acts on $(\alpha t\beta V, d_V+x)$ via the dg algebra map $t\hat{A}t\to (\alpha t\beta)\hat{A}(\alpha t\beta)$ constructed in Theorem \ref{abstract_lemma}. The inverse isomorphism is given by $t\beta\in \End(V)$:
\[t\beta: (\alpha t\beta V, d_V+x)\to(tV, d_V+tx\alpha t).\]
\end{enumerate}
\end{cor}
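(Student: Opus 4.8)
The plan is to deduce both statements from the abstract results of Theorem \ref{abstract_lemma} by viewing $V$ as a module over $\hat{A}$. First I would observe that the hypotheses are precisely a packaging of an $\hat{A}$-module structure: by the Proposition identifying dg $A$-modules with HDs carrying a perturbing MC operator, the data $(V,d_V,s,t)$ together with $x$ satisfying $(d_V+x)^2=0$ make $V$ a dg $A$-module, and the assumed invertibility of $1+sx$ and $1+xs$ promotes this to a dg $\hat{A}$-module. Concretely, the operators assemble into a dg algebra map $\mu\colon\hat{A}\to\End(V)$, where $\End(V)$ carries the commutator differential $[d_V,-]$, so that $\mu(d(a))=[d_V,\mu(a)]$ for all $a$.

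For part (1) I would exploit that $x$ is an MC element of $\hat{A}$, so twisting yields the dg $\hat{A}^x$-module $V^{[x]}=(V,d_V+x)$ (the action map $\mu\colon\hat{A}^x\to\End(V,d_V+x)$ is a dg map because $[d_V+x,\mu(a)]=[d_V,\mu(a)]+[x,\mu(a)]=\mu(d^x(a))$). Pulling back along the dg bialgebra isomorphism $\phi\colon\hat{A}\to\hat{A}^x$ of Theorem \ref{abstract_lemma}(1) produces a dg $\hat{A}$-module $\phi^*V^{[x]}$ on which the generators $s$ and $t$ act by $\mu(\phi(s))=\alpha s$ and $\mu(\phi(t))=\alpha t\beta$. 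Applying the same Proposition in the reverse direction to this $\hat{A}$-module shows at once that $(V,d_V+x,\alpha s,\alpha t\beta)$ is an HD: the differential conditions (2) and (3) of Definition \ref{def_H} are the images under $\phi$ of $d(s)=1-t$ and $d(t)=0$, while the purely algebraic relations (1), (4), (5) hold because $\phi$ is an algebra homomorphism. In the multiplicative case $V$ is a dg $\hat{A}$-algebra, and since $\phi$ is a bialgebra automorphism the pullback $\phi^*V^{[x]}$ is again a dg $\hat{A}$-algebra; equivalently, the grouplike identity $\Delta(\alpha t\beta)=\alpha t\beta\otimes\alpha t\beta$ and the skew-primitivity (\ref{skew}) established in Proposition \ref{phi} translate, via the characterization of multiplicative HDs, into $\alpha t\beta$ being an algebra endomorphism and $\alpha s$ an $(\alpha t\beta,\id)$-derivation, with $d_V+x$ a derivation since both $d_V$ and $x$ are.

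For part (2) I would specialize Theorem \ref{abstract_lemma}(2) to the module $V$: then $tV$ is a dg module over $(t\hat{A}t,d+tx\alpha t)$, the space $\alpha t\beta V$ is a dg module over $((\alpha t\beta)\hat{A}(\alpha t\beta),d^x)$, and the algebra isomorphism there makes $\alpha t\beta V$ a $t\hat{A}t$-module. It then remains to verify that the operators $\alpha t$ and $t\beta$ implement the stated isomorphism. That they are mutually inverse bijections between $tV$ and $\alpha t\beta V$ is a short computation from (\ref{easy}) and the identity $\beta\alpha=\alpha+\beta-1$: one finds $t\beta\cdot\alpha t=t$ and $\alpha t\cdot t\beta=\alpha t\beta$, together with $\alpha t\beta\cdot\alpha t=\alpha t$ (so that $\alpha t$ indeed lands in $\alpha t\beta V$) and $t\cdot t\beta=t\beta$. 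That $\alpha t$ is a chain map, i.e.\ $(d_V+x)\alpha t=\alpha t(d_V+tx\alpha t)$ on $tV$, follows from $[d_V,\alpha t]=(\alpha t-1)x\alpha t$, which is the image under $\mu$ of $d(\alpha t)=d(\alpha)t=(\alpha t-1)x\alpha t$ obtained from (\ref{easy'}); substituting and using $\alpha t\cdot t=\alpha t$ makes the two sides coincide.

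The main obstacle I anticipate is bookkeeping rather than conceptual: one must keep the ambient differential $d_V$ on $V$ consistently matched with the internal differential $d$ of $\hat{A}$ through the commutator differential on $\End(V)$, so that twisting by $x$ on the algebra side corresponds exactly to replacing $d_V$ by $d_V+x$ on the module side, and one must track the Koszul signs correctly when commuting $\alpha s$ (degree $-1$) past $x$ (degree $1$). Once this dictionary is fixed, both parts of the corollary are direct translations of the abstract statements.
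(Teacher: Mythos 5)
Your proposal is correct and takes essentially the same approach as the paper: part (1) is the paper's pullback of the twisted module structure along $\phi$ (phrased there as making $V$ an $H$-module via the composite $H\hookrightarrow(\hat{A},d)\xrightarrow{\phi}(\hat{A},d^x)$, so that $s,t$ act as $\alpha s$ and $\alpha t\beta$), and part (2) is the specialization of Theorem \ref{abstract_lemma}(2) to the image of $\hat{A}\to\End(V)$. Your explicit checks for (2) — mutual inverseness via $t\beta\alpha t=t$ and the chain-map identity from $d(\alpha t)=(\alpha t-1)x\alpha t$ — are just a more detailed rendering of the paper's brief appeal to that theorem together with the identity $t\alpha\beta t=t$.
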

\begin{proof}
The vector space $V$ supplied with the perturbed differential $d+x$ is a dg module over the twisted perturbation algebra
$(A,d^x)$. Therefore the composite map
\[\xymatrix{H\ar@{^{(}->}[r]& (\hat{A}, d)\ar^{\phi}[r]&(\hat{A},d^x)}\]
makes $V$ into an $H$-module (an $H$-algebra in the multiplicative case) with elements $t, s\in H$ acting on $V$ as operators $\alpha t\beta$ and $s\alpha$ respectively. This proves statement (1). For statement (2) note first that since $d(t)=0$ in $\hat{A}$, the differential $d_V$ restricts to $tV\hookrightarrow V$, the image of the projector $t$, making it a dg vector space. Similarly, $\alpha t\beta V$ is a dg vector space with respect to the perturbed differential $d_V+x$. In the multiplicative case both $tV$ and $\alpha t\beta V$ are dg algebras.  Statement (2) above now follows form the corresponding statement of Theorem \ref{abstract_lemma}. The claim about the inverse isomorphism also follows, taking into account the identity $t\alpha\beta t=t$.
\end{proof}
\begin{rem}
The HPL is particularly useful when $(V,s,t)$ is a \emph{harmonious} HD, i.e. when $tV$ carries the homology of $V$. In that case the differential $d_V$ vanishes in $tV$ and we have an isomorphism of dg vector spaces
\[
(tV,tx\alpha t)\cong (\alpha t\beta d_V, V+x).
\]
\end{rem}
\subsection{Minimal models for $A_\infty$ algebras} We will now outline a standard application of the HPL, cf. for example \cite{Hue}, \cite{markl}.

Recall that an $A_\infty$ algebra structure on a dg vector space $(V,d_V)$ is a continuous derivation $m=m_2+m_3+\ldots$ of the completed tensor algebra $\hat{T}\Sigma V^*$, the free local pseudo-compact algebra on the pseudo-compact vector space $V^*$; here $m_i$ is the component of $m$ that raises the tensor degree by $i-1$. The derivation $m$ is determined by its restriction on $\Sigma V^*$ that we will also denote by $m$. Thus, $m:\Sigma V^*\to \hat{T}\Sigma V^*$ and $m_n:\Sigma V^* \to (\Sigma V^*)^{\hat{\otimes} n}$. Note that this definition is equivalent by dualization to the, perhaps, more familiar one given in terms of multilinear maps $V^{\otimes n}\to V$.

Furthermore, an $A_\infty$ map between two $A_\infty$ algebras $(V,d_V,m_V)$ and $(U,d_U,m_U)$ is a continuous algebra map $f=f_1+f_2+\ldots :\hat{T}\Sigma U^*\to \hat{T}\Sigma V^*$ such that $f\circ (d_U+m_U)=(d_V+m_V)\circ f$; here $f_n$ is the component of $f$ raising the tensor degree by $n-1$. The map $f$ it is called a \emph{weak equivalence} if $f_1:\Sigma U^*\to\Sigma V^*$ is a quasi-isomorphism; there is also an appropriate notion of a homotopy between two $A_\infty$ maps. We refer to \cite{Ke, HL} for details on $A_\infty$ algebras.

If $V$ is supplied with an HD then so is $V^*$ and therefore $\hat{T}V^*$, cf. Example \ref{tensor}; there the definition was given for an ordinary (non-completed) tensor algebra, but the completed case is similar, cf. Remark \ref{pseudo}. Additionally, the action of $A$ on $\hat{T}V^*$ extends to an action of $\hat{A}$ (for this the completeness of the tensor algebra is necessary).

If the underlying space of an $A_\infty$ algebra, has vanishing differential, the corresponding $A_\infty$ algebra is called \emph{minimal}. The well-known result of Kadeishvili \cite{Kad} states that there for any $A_\infty$ algebra $(V,d_V, m_V)$ is an $A_\infty$ structure on $H^*(V)$ that is weakly equivalent to $(V,d_V,m_V)$. More recently, many proofs of this important results appeared, particularly those giving explicit formulas for minimal models. Perhaps the most straightforward and compact way to obtain these formulas is through the HPL. The following result was essentially formulated in \cite{Hue}. Its proof is just the unwrapping of the (pseudo-compact analogue of) Corollary \ref{ordinary}.
\begin{theorem}\label{minimal_model}
Let $(V, d, m)$ be an $A_\infty$ algebra and suppose that $V$ is supplied with operators $s,t$ constituting an HD on $V$, we will denote by $\tilde{t},\tilde{s}$ the corresponding operators giving a multiplicative HD on $\hat{T}\Sigma V^*$ (cf. Example \ref{tensor} for the construction in the discrete case). Then:
\begin{enumerate}
\item
The dg space $tV$ supports the structure of an $A_\infty$ algebra given by \[m_{t(V)}:=\tilde{t}m(1+\tilde{s}m)^{-1}\tilde{t};\]
 \item
 There are $A_\infty$ maps between $(V,m)$ and $tV,m_{t(V)}$ represented by
 \[
 (1+\tilde{s}m)^{-1}\tilde{t}:\hat{T}\Sigma (tV)^*\to  \hat{T}\Sigma V^*;
 \]
 \[
 \tilde{t}(1+m\tilde{s})^{-1} :\hat{T}\Sigma V^*\to \hat{T}\Sigma (tV)^*,
 \]
 such that
 \begin{align*}\tilde{t}(1+m\tilde{s})^{-1} )\circ(1+\tilde{s}m)^{-1}\tilde{t}&=\id_{tV} \quad \text{and}\\
  \tilde{t}(1+\tilde{s}m)^{-1} )\circ(1+m\tilde{s})^{-1}\tilde{t}&=\id-\tilde{s}(1+\tilde{s}m)^{-1}.
  \end{align*}
\end{enumerate}
\end{theorem}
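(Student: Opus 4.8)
The plan is to specialize the pseudo-compact, multiplicative form of Corollary \ref{ordinary} to the completed tensor algebra $\hat{T}\Sigma V^*$, taking the perturbation $x$ to be the $A_\infty$ structure $m$. First I would assemble the data. Dualizing the HD $(s,t)$ equips $V^*$ with an HD, and by Example \ref{tensor} in its pseudo-compact incarnation (Remark \ref{pseudo}) the completed tensor algebra $\hat{T}\Sigma V^*$ becomes a multiplicative HD carrying the operators $\tilde{t},\tilde{s}$. By definition $m$ is a continuous degree-$1$ derivation of $\hat{T}\Sigma V^*$ that raises the tensor degree, and the $A_\infty$ relation $(d+m)^2=0$ says precisely that $m$ is an admissible perturbation; thus $\hat{T}\Sigma V^*$ is a pseudo-compact $\hat{A}$-algebra. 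Because $\tilde{s}m$ and $m\tilde{s}$ strictly raise the tensor degree, the geometric series $\alpha=\sum_n(-\tilde{s}m)^n$ and $\beta=\sum_n(-m\tilde{s})^n$ converge in the completion, so $1+\tilde{s}m$ and $1+m\tilde{s}$ are invertible and the full $\hat{A}$-action is defined.

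For part (1), Corollary \ref{maurer} identifies the transferred perturbation as the MC element $\xi=\tilde{t}m\alpha\tilde{t}$, which is exactly the asserted formula $m_{t(V)}=\tilde{t}m(1+\tilde{s}m)^{-1}\tilde{t}$; since $m$ raises tensor degree and $\alpha=1-\tilde{s}m+\cdots$, the operator $\xi$ raises tensor degree as well, so on the subalgebra $\tilde{t}\,\hat{T}\Sigma V^*=\hat{T}\Sigma(tV)^*$ the operator $d+\xi$ has linear part $d|_{tV}$. The essential point is that $\xi$ acts there as a \emph{derivation}: by the multiplicative clause of Corollary \ref{ordinary}(2) the map $\alpha\tilde{t}$ is a multiplicative isomorphism of $(tV,d+\xi)$ onto the subalgebra $(\alpha\tilde{t}\beta\,\hat{T}\Sigma V^*,d+m)$, on which $d+m$ is a derivation; conjugating a derivation along a multiplicative isomorphism produces a derivation, so $d+\xi$, and hence $\xi=m_{t(V)}$, is a derivation of $\hat{T}\Sigma(tV)^*$. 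This is precisely the datum of an $A_\infty$ structure on $tV$.

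For part (2) I would exploit that the two comparison operators $(1+\tilde{s}m)^{-1}\tilde{t}=\alpha\tilde{t}$ and $\tilde{t}(1+m\tilde{s})^{-1}=\tilde{t}\beta$ were shown, in the proof of Proposition \ref{phi}, to be grouplike in $\hat{A}$. A grouplike element of $\hat{A}$ acts on any $\hat{A}$-algebra as a unital algebra endomorphism, since the multiplication is an $\hat{A}$-module map and $\Delta(g)=g\otimes g$, while $\epsilon(g)=1$ handles the unit. Hence $\alpha\tilde{t}$ and $\tilde{t}\beta$ restrict to continuous algebra maps $\hat{T}\Sigma(tV)^*\to\hat{T}\Sigma V^*$ and $\hat{T}\Sigma V^*\to\hat{T}\Sigma(tV)^*$; that they intertwine the codifferentials $d+\xi$ and $d+m$ is the content of Corollary \ref{ordinary}(2), so each is an $A_\infty$ morphism between $(V,m)$ and $(tV,m_{t(V)})$. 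The composition identities then descend from operator identities in $\hat{A}$ proved via \eqref{easy}: for instance $\tilde{t}\beta\,\alpha\tilde{t}=\tilde{t}(\alpha+\beta-1)\tilde{t}=\tilde{t}$, using $\beta\alpha=\alpha+\beta-1$ together with $t\alpha=\beta t=t$, which gives the asserted $\id_{tV}$; the second identity, recording $\id$ minus the transferred homotopy $\tilde{s}\alpha$, is obtained the same way.

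The one genuinely non-formal step, and the place I expect the real work to sit, is the multiplicative upgrade in part (1): one must verify that the transferred Maurer--Cartan element $\xi$ is not merely an abstract MC operator but an honest derivation of $\hat{T}\Sigma(tV)^*$, i.e. a bona fide $A_\infty$ structure. This is exactly where the multiplicative HD hypothesis and the grouplikeness of $\alpha t$ and $t\beta$ are indispensable, and where I would take care that every operator involved stays continuous and that Corollary \ref{ordinary} transfers verbatim to the pseudo-compact setting of Remark \ref{pseudo}; once that is granted, the remainder of the argument is the promised unwrapping.
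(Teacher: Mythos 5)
Your proposal is correct and follows exactly the route the paper intends: the paper offers no written-out proof of Theorem \ref{minimal_model}, stating only that it is ``the unwrapping of the (pseudo-compact analogue of) Corollary \ref{ordinary}'', and your argument is precisely that unwrapping, with the details (convergence of $\alpha,\beta$ via tensor-degree raising, the MC element $\xi=\tilde{t}m\alpha\tilde{t}$ from Corollary \ref{maurer}, multiplicativity via grouplikeness of $\alpha t$ and $t\beta$, and the composition identities from \eqref{easy}) filled in consistently with the paper's machinery. Your conjugation argument for part (1) could be shortened slightly by invoking the $t$-primitivity of $\xi$ from Corollary \ref{maurer}(2) directly, but it is equivalent and equally valid.
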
\noproof
This is a compact version of the minimal model theorem for $A_\infty$ algebras; writing the corresponding formulas in components leads to summations over certain decorated planar trees as explained in the papers of Markl and Huebschmann cited above.
\subsection{Gauge equivalence and the decomposition theorem}
Let $V$ be a dg $\hat{A}$ module, in other words a dg vector space with an HD and an MC operator $x\in \End(V): [d,x]=-x^2$, such that $1+sx$ and
$1+xs$ act invertibly on $V$. We saw in Corollary \ref{ordinary} that dg spaces $(tV, d_V+t\alpha xt)$ and $(\alpha t\beta V, d_V+x)$ are isomorphic. Note that since $t\alpha xt$ is an MC element (either in $\End(V)$ or $\hat{A}$, see Corollary \ref{maurer}), the operator $d_V+t\alpha xt$ squares to zero on the whole of $V$. It makes sense to ask, therefore, whether the operators $d_V+t\alpha xt$ and $d_V+x$ are conjugate or, equivalently, that the MC elements $x$ and $t\alpha xt$ are gauge equivalent  in $\End(V)$. This turns our to be true (see Corollary \ref{conj} below), in fact we will show that these elements are gauge equivalent in $\hat{A}$ (Theorem \ref{decomposition} below).
First we will introduce a certain invertible element in $\hat{A}$ closely related to the idempotent $\alpha t\beta$.
\begin{prop}\label{more_identities}
	Let $g:= (1+t-\alpha t)\beta.$ Then $g$ is invertible and  \[g^{-1}=\phi(g)=\beta^{-1}(1-t+\alpha t).\]Moreover, the following identities holds
 \begin{align*}
 (\alpha t\beta) g(\alpha t\beta)&=\alpha t\beta\\tg&=t\beta\\ g^{-1}t&=\alpha t.\end{align*}
	\end{prop}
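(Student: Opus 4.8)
The plan is to reduce every assertion to the elementary identities collected in (\ref{easy}), supplemented by a short list of derived cancellation rules that carry all the weight. From $t\alpha = t$ and $t^2 = t$ one gets $t\alpha t = t$ and $\alpha t\alpha t = \alpha t$; from $\beta\alpha = \alpha + \beta - 1$ together with $\beta t = t$ one gets $\beta\alpha t = \alpha t$, and multiplying this and $\beta t = t$ on the left by $\beta^{-1}=1+xs$ yields $\beta^{-1}t = t$ and $\beta^{-1}\alpha t = \alpha t$. Informally, $\alpha t$ absorbs left multiplication by $\beta$, by $\beta^{-1}$, and by $\alpha t$, while $t$ absorbs right multiplication by $\alpha$; these are precisely the cancellations needed below.

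First I would compute $\phi(g)$ using the automorphism of Proposition \ref{phi}. Since $\phi(\alpha t) = \phi(\alpha)\phi(t) = \alpha^{-1}(\alpha t\beta) = t\beta$ and $\phi(\beta) = \beta^{-1}$, applying $\phi$ to $g = (1 + t - \alpha t)\beta$ gives $\phi(g) = (1 + \alpha t\beta - t\beta)\beta^{-1} = \beta^{-1} + \alpha t - t$, and the derived rules $\beta^{-1}t = t$, $\beta^{-1}\alpha t = \alpha t$ recast this as $\beta^{-1}(1 - t + \alpha t)$, the claimed value. Next I would establish invertibility by checking $g\,\phi(g) = \phi(g)\,g = 1$ directly. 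Because $\beta\beta^{-1} = \beta^{-1}\beta = 1$, these products collapse to $(1 + t - \alpha t)(1 - t + \alpha t)$ and $\beta^{-1}(1 - t + \alpha t)(1 + t - \alpha t)\beta$ respectively; expanding each and applying $t^2 = t$, $t\alpha t = t$, $\alpha t\alpha t = \alpha t$ shows that both $(1 + (t - \alpha t))(1 - (t - \alpha t))$ and $(1 - (t - \alpha t))(1 + (t - \alpha t))$ equal $1$. Hence $g$ is invertible with $g^{-1} = \phi(g)$.

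Finally I would dispatch the three identities. For $tg = t\beta$, expanding $t(1 + t - \alpha t)\beta$ and using $t^2 = t$, $t\alpha t = t$ collapses the bracket to $t$. For $g^{-1}t = \alpha t$, I would substitute $g^{-1} = \beta^{-1}(1 - t + \alpha t)$ and apply $\beta^{-1}t = t$ and $\beta^{-1}\alpha t = \alpha t$. For $(\alpha t\beta) g(\alpha t\beta) = \alpha t\beta$, writing $e := \alpha t\beta$ (an idempotent, as recorded in the proof of Proposition \ref{phi}), I would first show $e g = e\beta$, which follows from $(\alpha t\beta)(1+t-\alpha t)=\alpha t\beta$ via the absorptions $\alpha t\beta t=\alpha t$ and $\alpha t\beta\alpha t = \alpha t(\beta\alpha t)=\alpha t\alpha t=\alpha t$, and then $e\beta e = e$, which follows from $\beta^2\alpha t = \alpha t$ and $\alpha t\alpha t = \alpha t$; together these give $ege = e$.

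The only genuine difficulty throughout is bookkeeping in the noncommutative setting: since $t\alpha = t$ but $\alpha t \neq t$ in general, one must track left versus right cancellations with care, and the simplifications are valid only in the stated order. Once the derived absorption rules above are in place, however, each of the computations is entirely mechanical.
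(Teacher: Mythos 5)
Your proof is correct and takes essentially the same route as the paper's: compute $\phi(g)$ via Proposition~\ref{phi} and the identities (\ref{easy}), verify $g\phi(g)=\phi(g)g=1$ by expanding $(1\pm(t-\alpha t))$ using $t\alpha=t$ and $t^2=t$, and reduce the three remaining identities to the same absorption rules ($\beta t=t$, $\beta\alpha t=\alpha t$, $\beta^{-1}t=t$, etc.). The only cosmetic differences are that the paper obtains the second inverse identity by applying the involution $\phi$ to the first instead of expanding both orders, and your factorization of $(\alpha t\beta)g(\alpha t\beta)$ through $eg=e\beta$ and $e\beta e=e$ repackages the paper's single chain of simplifications via $\beta\alpha=\alpha+\beta-1$.
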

\begin{proof}
Using Proposition \ref{phi} we compute:
\begin{align*}
\phi(g)&=\phi(1+t-\alpha t)\phi(\beta)\\
&=(1+\alpha t\beta-t\beta)\beta^{-1}\\
&=\beta^{-1}(1-t+\alpha t).
\end{align*}
 Furthermore, taking into account the identity $t\alpha=t$ we have
\begin{align*}
(1-t+\alpha t)(1+t-\alpha t)&=1+t-\alpha t-t-t-t\alpha t+\alpha t+\alpha t-\alpha t\alpha t\\
&=1
\end{align*}
 Then also $\phi(g) g = \phi(g\phi(g))=\phi(1)=1$. To prove the stated identities we compute :
\begin{align*}
\alpha t\beta g\alpha t\beta&=\alpha t\beta (1+t-\alpha t)\beta\alpha t\beta\\
&=(\alpha t\beta+\alpha t-\alpha t(\alpha+\beta-1)t)\beta\alpha t\beta\\
&=\alpha t\alpha t\beta\alpha t\beta\\
&=\alpha t\beta\alpha t\beta\\
&=\alpha t\beta.
\end{align*}
Next,
\begin{align*}
tg=&t(1+t-\alpha t)\beta t\\=&(2t-t\alpha t)\beta=t\beta.
\end{align*} 
Finally,
\begin{align*}
g^{-1}t&=\beta^{-1}(1-t+\alpha t)t\\
&=\beta^{-1}\alpha t=\alpha t.
\end{align*}	
\end{proof}
We will now prove the promised gauge equivalence.
\begin{theorem}\label{decomposition} Let $g\in \hat{A}$ be defined as above. Then the MC elements $x$ and $\xi$ are gauge equivalent via $g$, i.e. $gxg^{-1}-d(g)g^{-1} = \xi$.
\end{theorem}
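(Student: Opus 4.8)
The plan is to avoid expanding $g^{-1}$ as a power series and instead reduce the gauge identity to a polynomial identity in $\hat{A}$. Since $g$ is invertible, multiplying the claimed equation $gxg^{-1}-d(g)g^{-1}=\xi$ on the right by $g$ shows it is equivalent to
\[
gx - d(g) = \xi g,
\]
which now involves only the explicit element $g=(1+t-\alpha t)\beta$ together with $\xi=tx\alpha t$. I would verify this reformulated identity directly, computing the two sides separately and checking they agree.

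For the right-hand side I would first simplify $\xi g$ using the idempotent identities. Writing $\xi=t\beta x t$ (the equality $tx\alpha t = t\beta xt$ coming from $x\alpha=\beta x$ in \eqref{easy}, cf.\ Corollary \ref{maurer}) and using $t\alpha=t$ and $t^2=t$, one finds that $\xi$ absorbs the factor $1+t-\alpha t$, i.e.\ $\xi(1+t-\alpha t)=\xi$; hence $\xi g=\xi\beta = tx\alpha t\beta$. So $tx\alpha t\beta$ is the target that the left-hand side must reproduce.

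For the left-hand side I would compute $d(g)$ by the Leibniz rule (no Koszul signs occur, since $g$ has degree $0$), using $d(t)=0$ together with the derivative formulas $d(\alpha)=(\alpha t-1)x\alpha$ and $d(\beta)=\beta x(1-t\beta)$ of \eqref{easy'}; this gives $d(g)=(1-\alpha t)x\alpha t\beta+(1+t-\alpha t)\beta x(1-t\beta)$. Forming $gx-d(g)$, the two terms carrying the factor $(1+t-\alpha t)\beta x$ recombine to leave $(1+t-\alpha t)\beta x\,t\beta$; substituting $\beta x=x\alpha$ and then collapsing the bracket $(1+t-\alpha t)-(1-\alpha t)=t$ yields exactly $tx\alpha t\beta$, matching $\xi g$ and completing the proof.

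The computation is short, so the only genuine obstacle is bookkeeping: one must respect the asymmetry of the identities in \eqref{easy}, since $t\alpha=t$ and $\beta t=t$ hold while $\alpha t$ and $t\beta$ do \emph{not} simplify, and one must apply the $d(\alpha),d(\beta)$ formulas in the correct order. As a consistency check, the resulting conjugation agrees with Theorem \ref{abstract_lemma} and Proposition \ref{more_identities}: because $tg=t\beta$ and $g^{-1}t=\alpha t$, conjugation by $g$ carries the twisted differential $d+tx\alpha t$ on $t\hat{A}t$ to $d^x$ on $(\alpha t\beta)\hat{A}(\alpha t\beta)$, which is precisely what gauging $x$ into $\xi$ should accomplish.
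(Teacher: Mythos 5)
Your proof is correct: each step checks out against the identities \eqref{easy} and \eqref{easy'} --- in particular $\xi(1+t-\alpha t)=\xi$ via $t\alpha=t$ and $t^2=t$, your formula for $d(g)$ (with no Koszul signs, as $g$ has degree $0$), and the final collapse $(1+t-\alpha t)-(1-\alpha t)=t$ after substituting $\beta x=x\alpha$, giving $gx-d(g)=tx\alpha t\beta=\xi g$ on the nose. Your route is organized differently from the paper's. The paper factors $g=k\beta$ with $k=1+t-\alpha t$ and exploits that gauging is a group action: it first computes the intermediate one-step gauge $\beta\cdot x=\beta xt$ (using \eqref{easy'}) and then applies $k$, which requires the explicit inverse $k^{-1}=1-t+\alpha t$ supplied by Proposition~\ref{more_identities}. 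You instead clear denominators, reducing the claim to the inverse-free polynomial identity $gx-d(g)=\xi g$, so invertibility of $g$ is used only to justify the equivalence of the two formulations and no inverse ever enters the computation. The trade-off is that the paper's staging isolates the reusable fact $\beta\cdot x=\beta xt$ and displays the conceptual mechanism (composition of gauge transformations), while your version is more self-contained and somewhat less error-prone, since neither $g^{-1}$ nor $k^{-1}$ has to be manipulated; both arguments consume exactly the same inputs, namely \eqref{easy}, \eqref{easy'} and the two expressions $\xi=tx\alpha t=t\beta xt$, and your closing consistency check against Theorem~\ref{abstract_lemma} is apt.
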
	
\begin{proof}Let $k:=1+t-\alpha t$; we have seen that $g=k\beta$.
We have, taking into account (\ref{easy'}):
\begin{align*}\beta\cdot x &= \beta x\beta^{-1}-d(\beta)\beta^{-1}\\
&=\beta x\beta^{-1}-\beta x(1-t\beta)\beta^{-1}\\
&=\beta xt.
\end{align*}
Thus
\begin{align*}
	g\cdot x  & = k\cdot (\beta xt) \\
	&=  [k\beta xt  - d(k)]k^{-1} \\
	& = [(1+t-\alpha t)\beta xt+(\alpha t-1)x\alpha t](1-t+\alpha t) \\
	& = t\beta xt=\xi.
	\end{align*}
\end{proof}
\begin{rem}
	Using the anti-automorphism $\rho$, we can construct another invertible element  of $\hat{A}$ effecting gauge equivalence of $x$ and $\xi$. Indeed, since $\rho(x)=-x$ and $\rho(\xi)=-\xi$, it is easy to deduce from $g\cdot x=\xi$ that $\rho(g)^{-1}\cdot x = \xi$.  As a consequence the MC element $x$ is fixed under the gauge action of the $\rho$-invariant element $\rho(g)g$.
	
	Taking $\rho(g)^{-1}=(1-t+t\beta)(1+sx)=1-t+sx+t\beta$ in place of $g$, one  obtains slightly different versions of Corollary~\ref{conj}, Corollary~\ref{dgcattwist}, Theorem~\ref{decomposition_theorem} and Proposition~\ref{module} below.
	\end{rem}
\begin{cor}\label{conj}
Let $(V, d_V, s, t,x)$ be a dg $\hat{A}$-module. Then the following equality of operators on $V$ holds:
\[
(1+t-\alpha t)\beta (d_V+x) ((1+t-\alpha t)\beta)^{-1}=d_V+t\alpha xt.
\]
\end{cor}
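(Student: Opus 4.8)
The plan is to obtain this operator identity as the image, under the module structure map, of the gauge equivalence $g\cdot x=\xi$ established in Theorem~\ref{decomposition}, where $g=(1+t-\alpha t)\beta$ and $\xi=tx\alpha t$. By definition, a dg $\hat A$-module $(V,d_V,s,t,x)$ amounts to a map of dg algebras $\pi\colon\hat A\to\End(V)$, where $\End(V)$ carries the differential given by the commutator $[d_V,-]$. The standing hypothesis that $1+sx$ and $1+xs$ act invertibly guarantees that $\alpha$, $\beta$, and hence $g$, act as well-defined invertible operators, so that $\pi(g)^{-1}=\pi(g^{-1})$ with $g^{-1}=\beta^{-1}(1-t+\alpha t)$ by Proposition~\ref{more_identities}. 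Throughout I would suppress $\pi$ and denote each algebra element and its image operator by the same symbol.

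The conceptual point is the general principle that a dg algebra map carries the gauge action on Maurer--Cartan elements to conjugation of twisted differentials. First I would record that, since $g$ has degree $0$, the map $\pi$ sends $d(g)$ to $[d_V,g]=d_Vg-gd_V$, whence
\begin{align*}
d(g)g^{-1}=(d_Vg-gd_V)g^{-1}=d_V-gd_Vg^{-1}
\end{align*}
as operators on $V$, and therefore $gd_Vg^{-1}=d_V-d(g)g^{-1}$. Consequently, for the degree-$1$ operator $x$,
\begin{align*}
g(d_V+x)g^{-1}=gd_Vg^{-1}+gxg^{-1}=d_V+\bigl(gxg^{-1}-d(g)g^{-1}\bigr).
\end{align*}

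It then remains only to invoke Theorem~\ref{decomposition}, which gives $gxg^{-1}-d(g)g^{-1}=\xi$ in $\hat A$. Substituting this into the previous display yields
\begin{align*}
(1+t-\alpha t)\beta\,(d_V+x)\,\bigl((1+t-\alpha t)\beta\bigr)^{-1}=d_V+\xi,
\end{align*}
the right-hand side being $d_V$ plus the Maurer--Cartan operator $\xi=tx\alpha t$ of Corollary~\ref{maurer}, as claimed. Since everything reduces to Theorem~\ref{decomposition}, there is no genuine obstacle here; the only point demanding care is the translation of the algebraic differential $d$ on $\hat A$ into the commutator $[d_V,-]$ on $\End(V)$, together with the fact that $g$ has degree $0$ so that no Koszul signs intervene. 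It is exactly the $-d(g)g^{-1}$ summand of the gauge action that produces the $d_V-gd_Vg^{-1}$ contribution needed to conjugate the unperturbed differential, and I would emphasize this in writing up the argument.
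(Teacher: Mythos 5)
Your proof is correct and follows essentially the same route as the paper: the paper likewise observes that the claimed operator identity is the image, under the dg algebra map $\hat{A}\to\End(V)$ (with differential $[d_V,-]$), of the gauge equivalence $g\cdot x=\xi$ from Theorem~\ref{decomposition}. You merely spell out in more detail the translation of $d(g)$ into $[d_V,g]$ and the resulting identity $gd_Vg^{-1}=d_V-d(g)g^{-1}$, which the paper leaves implicit.
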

\begin{proof}
The claimed equation is equivalent to \[(1+t-\alpha t)\beta x ((1+t-\alpha t)\beta)^{-1}-d_V(1+t-\alpha t)(1+t-\alpha t)^{-1}=d_V+t\alpha xt\]
which holds in $\End(V)$ because it holds in $\hat{A}$.
\end{proof}
We will now explain how this leads to an explicit form of the  decomposition theorem for $A_\infty$ algebras, \cite{Kajiura, lef, CL, LV}. The latter result is a strengthened version of the minimal model theorem; for an $A_\infty$ algebra $(V,d_V, m)$ with a choice of an HD, it gives an $A_\infty$ \emph{isomorphism} (as opposed to merely a weak equivalence) between $(V,d_V, m)$ and an $A_\infty$ algebra that is a direct sum of one supported on $tV$ and a linear contractible one supported on $(1-t)V$ (a linear contractible $A_\infty$ algebra has acyclic differential and vanishing higher products).

Let $(V,d,m)$ be an $A_\infty$ algebra with an HD as above. Recall that we have also operators $\tilde{s}$ and $\tilde{t}$ giving a multiplicative HD on $\hat{T}\Sigma V^*$. Consider the operator $m_{t(V)}=\tilde{t}m(1+\tilde{s}m)^{-1}\tilde{t}\in \End(\hat{T}\Sigma V^*)$.   It is equivalent to the original $m$ via the gauge transformation
$g=(1+\tilde{t}-(1+\tilde{s}m)^{-1}\tilde{t})(1+m\tilde{s})^{-1}\in\End(\hat{T}\Sigma V^*)$.
This is not yet the statement we are looking for. Firstly, $m_{t(V)}$  not a derivation of $\hat{T}\Sigma V^*$, but a $\tilde{t}$-derivation, i.e. for $a,b\in \hat{T}\Sigma V^*$ we have $m_{tV}(ab)=m_{tV}a\tilde{t}(b)+(-1)^{|a|}\tilde{t}(a)m_{tV}(b)$. Secondly,
$g$ is not a multiplicative automorphism of $\hat{T}\Sigma V^*$. Nevertheless it turns out, rather surprisingly, that a small modification to $g$ does produce the desired multiplicative isomorphism as we will now explain. 

For any (continuous, possibly non-multiplicative) endomorphism $h$ of $\hat{T}\Sigma V^*$, denote by $\overline{h}$ the \emph{multiplicative} endomorphism of $\hat{T}\Sigma V^*$ such that $\overline{h}|_{V^*}=h|_{V^*}$; this clearly determines $\overline{h}$ unambiguously. 
  It is easy to see that for any $v\in V^*$, we have $g^{-1}(v)=v+\text{terms  of tensor degree $>$1}$ from which it follows that $\overline{g^{-1}}$ is a multiplicative \emph{automorphism} of $\hat{T}\Sigma V^*$.
  As such, it acts by gauge transformations on derivations of $\hat{T\Sigma }V^*$ having degree quadratic or higher, i.e. $A_\infty$ structures on $V$. 

Furthermore, for an endomorphism (not necessarily a derivation) $f$ of $\hat{T}\Sigma V^*$ denote by $\check{f}$ the \emph{derivation} of $\hat{T}\Sigma V^*$ such that $\check{f}|_{V^*}=f|_{V^*}$.
Note that the derivation $\check{m}:=\check{m}_{tV}$ of $\hat{T}\Sigma V^*$ is an $A_\infty$ structure on $V$ that is a direct sum of a minimal $A_\infty$ structure on $V$ and a linear contractible one.

Then we have the following result, the promised decomposition theorem for $A_\infty$ algebras.

\begin{theorem}\label{decomposition_theorem}
The multiplicative automorphism $\overline{g^{-1}}$ determines a canonical (with respect to the HD on $V$) $A_\infty$ isomorphism $(V,m)\to(V,\check{m})$. More precisely, we have:
\begin{equation}\label{inverse} m=\overline{g^{-1}}\check{m}\overline{g^{-1}}^{-1}-d\overline{g^{-1}}~
	\overline{g^{-1}}^{-1}.
\end{equation}
\end{theorem}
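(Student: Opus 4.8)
The plan is to read equation (\ref{inverse}) as the assertion that $\Psi:=\overline{g^{-1}}$ is an $A_\infty$ isomorphism, i.e.\ that it intertwines the two codifferentials. Rearranging the gauge-action formula, (\ref{inverse}) is equivalent to the operator identity
\[
\Psi^{-1}(d+m)\Psi = d+\check m
\]
on $\hat{T}\Sigma V^*$. Both sides are \emph{derivations}: the right-hand side by construction of $\check m$, and the left-hand side because it is the conjugate of the derivation $d+m$ by the algebra automorphism $\Psi$. Since a continuous derivation of the free algebra $\hat{T}\Sigma V^*$ is determined by its restriction to the topological generators $\Sigma V^*$, it suffices to verify the displayed identity after evaluating on an arbitrary $v\in\Sigma V^*$.

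First I would feed in the exact, non-multiplicative identity coming from Theorem~\ref{decomposition} (equivalently Corollary~\ref{conj}) applied to the $\hat{A}$-module $\hat{T}\Sigma V^*$, on which $x,s,t$ act as $m,\tilde{s},\tilde{t}$ and $g$ acts as $(1+\tilde{t}-\tilde{\alpha}\tilde{t})\tilde{\beta}$. The gauge equivalence $g\cdot x=\xi$ becomes
\[
g\,(d+m)\,g^{-1}=d+m_{tV},\qquad m_{tV}=\tilde{t}\,m\,\tilde{\alpha}\,\tilde{t}.
\]
Because $\Psi$ agrees with $g^{-1}$ on generators, $(d+m)\Psi(v)=(d+m)g^{-1}(v)=g^{-1}(d+m_{tV})(v)$, so
\[
\Psi^{-1}(d+m)\Psi(v)=Q\bigl((d+m_{tV})(v)\bigr),\qquad Q:=\Psi^{-1}g^{-1}=\overline{g^{-1}}^{-1}g^{-1}.
\]
The operator $Q$ fixes every generator, since $Q(v)=\Psi^{-1}(g^{-1}(v))=\Psi^{-1}(\Psi(v))=v$; in particular $Q(d(v))=d(v)$. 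As $\check m$ and $m_{tV}$ coincide on generators, the whole theorem reduces to the single claim that $Q$ fixes $m_{tV}(v)$ for every $v\in\Sigma V^*$.

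The heart of the matter — the source of the ``surprising'' phenomenon noted before the statement — is this last invariance. The key observation is that $m_{tV}(v)=\tilde{t}\,m\,\tilde{\alpha}\,\tilde{t}(v)$ lies in the image $B:=\tilde{t}(\hat{T}\Sigma V^*)$, the subalgebra generated by the $t$-type generators $\tilde{t}(\Sigma V^*)$, on which $\tilde{s}$ vanishes because $\tilde{s}\tilde{t}=0$ (Example~\ref{tensor}). I would exploit this as follows. For $w\in B$ one has $\tilde{t}(w)=w$ and $\tilde{s}(w)=0$, and a short manipulation using the recursion $\tilde{\alpha}=1-\tilde{s}m\tilde{\alpha}$ together with $\tilde{s}^2=0$ collapses $g^{-1}=\tilde{\beta}^{-1}(1-\tilde{t}+\tilde{\alpha}\tilde{t})$ to $g^{-1}|_B=\tilde{\alpha}|_B$. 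It then remains to show that $\tilde{\alpha}$ restricted to $B$ is \emph{multiplicative} as a map $B\to\hat{T}\Sigma V^*$; granting this, $g^{-1}|_B$ and $\Psi|_B$ are two algebra maps agreeing on the generators of $B$, hence equal, so $Q|_B=\id$ and in particular $Q(m_{tV}(v))=m_{tV}(v)$, completing the argument.

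Thus the one genuinely computational step, and the step I expect to be the main obstacle, is the multiplicativity of $\tilde{\alpha}|_B$, which I would establish by induction on tensor degree from $\tilde{\alpha}=1-\tilde{s}m\tilde{\alpha}$. The structural input that makes the induction work is that $\tilde{s}$ is a $(\tilde{t},\id)$-derivation while $st=0$: when $\tilde{s}$ acts on $m$ applied to a product of $t$-type factors, the $\tilde{t}$ appearing in the Leibniz rule fixes each $t$-type prefix while $s$ annihilates it, so $\tilde{s}$ ``passes through'' the left factors as the identity and differentiates only the tail, which is exactly the behaviour matching $\tilde{\alpha}(w_1 w_2)=\tilde{\alpha}(w_1)\tilde{\alpha}(w_2)$. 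All of the bookkeeping is concentrated here rather than in the reduction. An alternative that bypasses the induction is to read off the multiplicativity of $\tilde{\alpha}|_B$ directly from Theorem~\ref{minimal_model}, whose map $\tilde{\alpha}\tilde{t}\colon\hat{T}\Sigma (tV)^*\to\hat{T}\Sigma V^*$ is already asserted to be a (multiplicative) $A_\infty$ morphism.
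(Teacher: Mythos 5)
Your proposal is correct and is essentially the paper's own argument repackaged: the paper likewise evaluates the gauge identity of Theorem \ref{decomposition} on generators $v\in\Sigma V^*$, uses that derivations of $\hat{T}\Sigma V^*$ are determined by their restriction to $\Sigma V^*$, and replaces $g^{-1}$ by $\overline{g^{-1}}$ on the element $m_{tV}(v)$ precisely because $m_{tV}=\tilde{t}m_{tV}$ lands in the image of $\tilde{t}$, where $g^{-1}$ acts as the multiplicative map $\tilde{\alpha}\tilde{t}$ (your $Q|_B=\operatorname{id}$ step). The one point you single out as the main obstacle, the multiplicativity of $\tilde{\alpha}|_B$, needs no induction: it is immediate from the grouplikeness of $\alpha t$ in $\hat{A}$ established in Proposition \ref{phi} (equivalently, your alternative citation of Theorem \ref{minimal_model} is legitimate), which is exactly what the paper invokes.
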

\begin{proof}
The automorphism $g$ determines a gauge equivalence between $m_{tV}$ and $m$:
\[
m_{tV}=gmg^{-1}-dg g^{-1}.
\]
Evaluating this on an element $v\in V^*$ and multiplying through, we obtain:
\begin{equation}\label{inverse1}
g^{-1}m_{tV}(v)=mg^{-1}(v)+d(g^{-1})(v)
\end{equation}
Since $m_{tV}=\tilde{t}m_{tV}$ and $g^{-1}\tilde{t}=\alpha \tilde{t}$, which is a multiplicative automorphism of $\hat{T\Sigma V^*}$, and since $m_{tV}(v)=\check{m}(v)$, we can rewrite (\ref{inverse1}) as
\[
\overline{g^{-1}}\check{m}(v)=m\overline{g^{-1}}(v)+d(\overline{g^{-1}})(v).
\]
or, equivalently, as
\[
\check{m}(v)=\overline{g^{-1}}^{-1}m\overline{g^{-1}}(v)+\overline{g^{-1}}^{-1}d(\overline{g^{-1}})(v)
\]
Since the last equality holds for any $v\in V^*$, it follows that
\[
\check{m}=\overline{g^{-1}}^{-1}m\overline{g^{-1}}+\overline{g^{-1}}^{-1}d(\overline{g^{-1}})
\]
which is clearly equivalent to (\ref{inverse}).

\end{proof}		
\subsection{Decomposition theorem in categories of twisted objects}
Let $\C$ be a dg category, i.e. a category enriched in dg $\ground$-vector spaces.
\begin{defi} The dg category $\twC$ of twisted objects in $\C$ is defined as follows. Its objects are pairs $(M,x)$, where
$M\in\C$ and $x$ is a MC element in the dga $\End_{\C}(M)$, and the morphism spaces are given by $\Hom_{\twC}((M,x),(M',x'))=\Hom_\C(M,M')$ as graded spaces, with differential $d_{\twC}$ defined by
$d_{\twC}(f)=d_\C(f)+x'\circ f - (-1)^{|f|} f\circ x$.
\end{defi}
Note that the full subcategory of $\twC$ consisting of objects of the form $(M,0)$ is a equivalent to $\C$. Our definition differs to the standard one originally given in \cite{BK} in that the latter is obtained by first adding formally all direct sums to $\C$ and then forming the $\tw$-construction.
\begin{example}
Let $A$ be a dg algebra.
\begin{enumerate}
\item
Viewing $A$ as a dg category with one object, we see that $\tw(A)$ has MC elements in $A$ as objects. The dg space of morphisms between two MC elements $x$ and $y$ is $A^{[x,y]}$, whose underlying vector space is $A$ and the differential is $d^{[x,y]}$ so that for $a\in A$ we have $d^{[x,y]}(a)=d_A(a)+ya-(-1)^{a}ax$.  Note that $A^{[x,y]}$ is identified with the dg space of right $A$-module morphisms between $A^{[x]}$ and $A^{[y]}$.
\item
Take $\C$ to be the category of free $A$-modules. Then $\twC$ is what is normally called the category of twisted complexes of $A$-modules. Its objects are free $A$-modules with twisted differentials, and morphisms are the usual dg Hom-spaces of $A$-module morphisms.
\end{enumerate}
\end{example}

We have the following easy application of Theorem \ref{decomposition}.

\begin{cor}\label{dgcattwist}
Let $(M,x)$ be an object of $\twC$.  Suppose that $M$ is equipped with a Hodge decomposition, i.e, elements $t,s\in\End_{\C}(M)$ of degrees $0$ and $-1$, respectively, satisfying the
relations in Definition~\ref{def_H}, and assume that $1+xs$ (equivalently $1+sx$) is invertible.
Suppose further that the image of $t$ in $\C$ splits: there exists a direct sum decomposition $M = H \oplus K$ with $\End_{\C}(H)=t\End_\C(M)t$ and $\End_{\C}(K)=(1-t)\End_{\C}(M)(1-t)$. Then the automorphism
$(1+t-(1-sx)^{-1} t)(1-xs)^{-1}$ of $M$ induces an isomorphism $(M,x) \cong (H, tx(1+sx)^{-1}t) \oplus (K,0)$ in $\twC$, and therefore an isomorphism  $(M,x) \cong (H, tx(1+sx)^{-1}t)$ in $H^0(\twC)$.
\end{cor}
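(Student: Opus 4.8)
The plan is to deduce everything from the gauge equivalence already established in Theorem \ref{decomposition}, treating the corollary as its specialization to $\End_\C(M)$. First I would observe that the hypotheses are exactly what is needed to make $\End_\C(M)$ a target for a homomorphism of dg algebras out of $\hat{A}$. Indeed, $s,t\in\End_\C(M)$ satisfy $s^2=st=ts=0$, $t^2=t$, together with $d_\C(t)=0$ and $d_\C(s)=1-t$ (this last being the categorical reading of axiom (2) of Definition \ref{def_H}), while $x$, being an MC element of $\End_\C(M)$, satisfies $d_\C(x)=-x^2$; finally $1+sx$ and $1+xs$ are assumed invertible. These are precisely the defining relations, differentials, and localizations of $\hat{A}$, so sending the generators $s,t,x$ to the namesake operators yields a map of dg algebras $\hat{A}\to\End_\C(M)$, and every universal identity in $\hat{A}$ specializes along it.

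Next I would feed in Theorem \ref{decomposition}: in $\hat{A}$ the element $g=(1+t-\alpha t)\beta$ satisfies $g\cdot x=gxg^{-1}-d(g)g^{-1}=\xi$ with $\xi=tx\alpha t$. Specializing gives, in $\End_\C(M)$, the identity $d_\C(g)+\xi g-gx=0$. This is exactly the assertion that $g$, viewed as a degree-$0$ morphism $(M,x)\to(M,\xi)$, is closed for $d_{\twC}$; since $g$ is invertible in $\hat{A}$ its image is invertible, so $g$ is an isomorphism $(M,x)\xrightarrow{\cong}(M,\xi)$ in $\twC$, and its image is the automorphism displayed in the statement.

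The remaining step is to split the target. Because $\xi=tx\alpha t=t\xi t$ lies in $t\End_\C(M)t=\End_\C(H)$, the idempotent $t$ being the projector onto $H$ in the decomposition $M=H\oplus K$, the operator $\xi$ is block-diagonal and supported on the $H$-summand. Hence $(M,\xi)$ is the direct sum $(H,\xi)\oplus(K,0)$ in $\twC$, and composing with the isomorphism above yields $(M,x)\cong(H,\xi)\oplus(K,0)$. To pass to $H^0(\twC)$ I would then exhibit $(K,0)$ as a zero object: the element $s=(1-t)s(1-t)$ lies in $\End_\C(K)$ and satisfies $d_{\twC}(s)=d_\C(s)=1-t$, which is the identity of $(K,0)$; thus $\id_{(K,0)}$ is a $d_{\twC}$-boundary and $(K,0)\cong 0$ in $H^0(\twC)$, giving $(M,x)\cong(H,\xi)$ there.

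I expect no serious obstacle, since the hard content has already been isolated in Theorem \ref{decomposition} and the corollary is its unwrapping. The two points needing genuine care are the bookkeeping in the first paragraph --- matching the categorical HD axioms and the MC sign convention so that the homomorphism $\hat{A}\to\End_\C(M)$ really exists and the universal gauge identity transfers faithfully --- and the verification that $\xi=t\xi t$ forces the splitting of $(M,\xi)$, for which one must know that $t$ is exactly the idempotent cutting out the summand $H$.
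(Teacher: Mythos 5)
Your proposal is correct and follows essentially the same route as the paper's proof: both use the invertibility of $1+xs$ to extend the $A$-action to a dg algebra map $\hat{A}\to\End_{\C}(M)$, specialize the gauge identity $d(g)+\xi g-gx=0$ from Theorem~\ref{decomposition} to exhibit $g$ as a closed invertible morphism in $\twC$, and then use the splitting of $t$ to decompose $(M,\xi)$ as $(H,\xi)\oplus(K,0)$. Your final paragraph showing $(K,0)\cong 0$ in $H^0(\twC)$ via $d_{\twC}(s)=1-t=\id_K$ is a detail the paper leaves implicit, and is a welcome completion rather than a departure.
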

\begin{proof}
Because the element $1+xs$ is invertible as an endomorphism of $M$, the action of the dg algebra $A$ on $M$ extends to that of $\hat{A}$, i.e. there is a dg algebra map $\hat{A}\to \End_{\C}(M)$. Since $\xi=tx\alpha t\in \hat{A}$ is an MC element, its image $tx(1+sx)^{-1}t$ is an MC element in $\End_{\C}(M)$. Thus, $(M,x)$ and $(M,tx(1+sx)^{-1}t)$ are objects in $\twC$.

Furthermore, by Theorem \ref{decomposition} we have $\xi=gxg^{-1}-dg\cdot g^{-1}$
where $g=(1 + t - \alpha t)\beta$, which implies that $dg+\xi\cdot g-gx=0$. It follows that the element $g$, viewed as an endomorphism of $M$, determines a morphism $(M,tx(1+sx)^{-1}t)\to (M,x)$ in $\twC$, which is an isomorphism since $g$ is invertible. Finally, by assumption, there is an isomorphism $(M,tx(1+sx)^{-1}t)\cong (H, tx(1+sx)^{-1}t) \oplus (K,0)$ and we are done.
\end{proof}

We finish by applying our results to constructing an explicit minimal model for $A_\infty$ modules.
Let $(V,m)$ be an $A_\infty$ algebra and $(M, d^M)$ be a dg vector space. The structure of an $A_\infty$ module over $V$ on $M$ is a $\hat{T}\Sigma V^*$-linear differential $m^M$ on $\hat{T}\Sigma V^*\otimes M^*$ having the form $m^M=m^M_2+m^M_3+\ldots$ where $m^M_i:M^*\to \hat{T}^{i-1}\Sigma V^*\otimes M^*$. We will refer to a pair $(M,m^M)$ as an $A_\infty$ module over $(V,m)$. A map between $A_\infty$ modules $(M, m^M)$ and $(N,m^N)$ is a morphism of the corresponding $\hat{T}\Sigma V^*$ modules. Such a map will necessarily have the form $f=f_1+f_2+\ldots$ where $f_i:N^*\to \hat{T}^{i-1}\Sigma V^*\otimes M^*$; it is a weak equivalence if $f_1:N^*\to M^*$ is a quasi-isomorphism.  We refer to \cite{Ke} for a more detailed discussion of $A_\infty$ modules.

An $A_\infty$ module $(M,m^M)$ is \emph{minimal} if the differential $d^M$ vanishes. Similarly to the case of $A_\infty$ algebras, a weak equivalence between minimal $A_\infty$ modules is necessarily an isomorphism. On the opposite end of the spectrum we have a \emph{linear contractible} $A_\infty$ module: one having the form $(M,0)$ and with a contractible differential $d_M$.

Then we have the following decomposition theorem for $A_\infty$ modules as a direct consequence of Corollary \ref{dgcattwist}.
\begin{prop}\label{module}
Let $(M, m)$ be an $A_\infty$ module over an $A_\infty$ algebra $V$. Suppose that the dg vector space $(M,d^M)$ is supplied with operators $s,t$ turning it into an HD. Then the automorphism
$(1+t-(1-sm)^{-1} t)(1-ms)^{-1}$ of the dg space $(M, d_M)$ determines an isomorphism of $(M,m)$ to the direct sum of a minimal $A_\infty$ module $(tM, tm(1+sm)^{-1}t)$ and a linear contractible $A_\infty$ module $((1-t)M,d_M)$.
\end{prop}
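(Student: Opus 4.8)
The plan is to recognize an $A_\infty$ module as a twisted object in a suitable dg category and then invoke Corollary~\ref{dgcattwist} essentially verbatim. Write $R:=\hat{T}\Sigma V^*$, regarded as a dg algebra with the differential $d+m_V$ encoding the $A_\infty$ structure on $V$, and let $\C$ be the dg category of free $R$-modules (as in the example following the definition of $\twC$). The underlying free module $P:=\hat{T}\Sigma V^*\otimes M^*$ carries the ``untwisted'' differential $D_0=d_R\otimes 1+1\otimes d^{M^*}$, and the module structure maps assemble into a degree-one, $R$-linear operator $m\in\End_R(P)$ landing in strictly positive tensor degree. The equation $\partial(m)+m^2=0$ in $\End_R(P)$, with $\partial=[D_0,-]$, is exactly the assertion $(D_0+m)^2=0$, i.e. that $m$ is a genuine $A_\infty$ module structure; hence $m$ is an MC element and $(P,m)$ is an object of $\twC$.

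Next I would transport the Hodge decomposition from $M$ to $P$. Dualizing $(s,t)$ gives an HD on $(M^*,d^{M^*})$, and setting $t_P:=1\otimes t$ and $s_P:=1\otimes s$ yields $R$-linear operators on $P$. I would then check the axioms of Definition~\ref{def_H} for $(P,D_0,s_P,t_P)$: the decisive point is that $s_P,t_P$ act only on the $M^*$-tensorand, so in the graded commutators with $D_0$ the contributions of $d_R\otimes 1$ cancel and only the relations inherited from $M^*$ survive, giving for instance $s_PD_0+D_0s_P=1-t_P$. Invertibility of $1+s_Pm$ (equivalently $1+ms_P$) is automatic: since $m$ raises tensor degree, $s_Pm$ is topologically nilpotent and $\sum_{n\ge 0}(-1)^n(s_Pm)^n$ converges in the completed algebra $\End_R(P)$.

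Finally I would verify the splitting hypothesis of Corollary~\ref{dgcattwist} and read off the conclusion. The idempotent $t$ splits $M^*=tM^*\oplus(1-t)M^*$, hence $P=H\oplus K$ with $H=R\otimes tM^*$ and $K=R\otimes(1-t)M^*$; both are free $R$-modules, objects of $\C$, and, being direct summands, they satisfy the corner conditions $\End_\C(H)=t_P\End_\C(P)t_P$ and $\End_\C(K)=(1-t_P)\End_\C(P)(1-t_P)$. Corollary~\ref{dgcattwist} then supplies the automorphism $(1+t_P-(1-s_Pm)^{-1}t_P)(1-ms_P)^{-1}$ and the isomorphism $(P,m)\cong\bigl(H,\,t_Pm(1+s_Pm)^{-1}t_P\bigr)\oplus(K,0)$ in $\twC$. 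Translating back, the first summand is the $A_\infty$ module $(tM,tm(1+sm)^{-1}t)$ and the second is the linear module $((1-t)M,d_M)$ with vanishing higher structure; the latter is contractible because the HD relation $sd_M+d_Ms=1-t$ restricts on $(1-t)M=\ker t$ to $sd_M+d_Ms=1$, so $s$ is a contracting homotopy, while the former is minimal (its internal differential vanishing precisely when the HD is harmonious, so that $tM$ represents $H^*(M)$).

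Since the whole argument is a specialization of Corollary~\ref{dgcattwist}, the main obstacle is bookkeeping rather than a genuine difficulty: one must set up $\C$, $P$ and $D_0$ so that the module maps $m$ are \emph{literally} an MC element of $\End_R(P)$, confirm that $s_P,t_P$ are $R$-linear and form an HD for $D_0$, and handle the convergence of the inverses in the pseudo-compact (completed) framework. Once these are in place, matching the transported element $t_Pm(1+s_Pm)^{-1}t_P$ with the claimed structure maps (a summation over decorated trees, as in Theorem~\ref{minimal_model}) is purely formal.
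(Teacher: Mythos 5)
Your proof is correct and is exactly the argument the paper intends: the paper gives no proof of Proposition~\ref{module}, declaring it a direct consequence of Corollary~\ref{dgcattwist}, and your write-up is precisely that specialization (twisted objects in the dg category of free $\hat{T}\Sigma V^*$-modules, the $R$-linear HD $1\otimes s$, $1\otimes t$ on $\hat{T}\Sigma V^*\otimes M^*$, invertibility of $1+s_Pm$ via completeness in tensor degree, and the evident splitting of the idempotent $t_P$), with the bookkeeping the paper omits filled in. Your parenthetical caveat that the internal differential on $tM$ vanishes only for a harmonious HD is consistent with the paper's own remark on harmonious decompositions and does not affect the claimed isomorphism.
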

\noproof

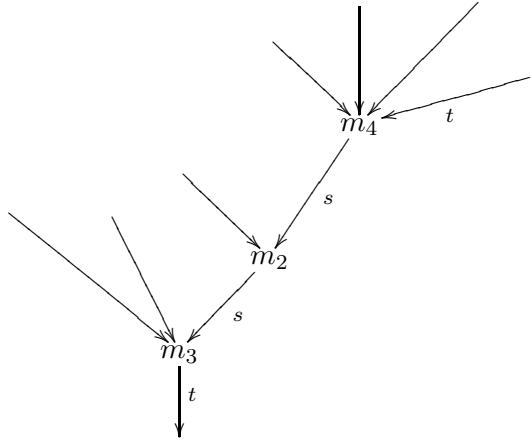
\begin{figure} [t]
	\[
	\xymatrix
	@C=3.5ex@R=2.50ex@M=0.3EX
	{
		&&&&& \ar[ddd] && \ar[dddll]\\
		&&&& \ar[ddr] \\
		&&&&&&&& \ar[dlll]^t\\
		&&&&& m_4 \ar[dddl]^s\\
		&&& \ar[drd] \\
		\ar[dddrrr] && \ar[dddr]\\
		&&&& m_2 \ar[ddl]^s &  \\
		& &&&&& \\
		&&& m_3 \ar[dd]^t \\
		\\
		&&&& \\
	}
	\]
	\caption{Definition of $m_\Gamma$.}
	\label{planartree}
\end{figure}

The (dual) structure maps $({m^{tM}_i})^*\colon T^{i-1}\Sigma^{-1} V \otimes tM \to tM$ for the minimal model $tM$ can be described explicitly as follows. Let $\Gamma$ be a planar rooted tree with $i+1$ extremities with the property that every branching has valence at least three and is below the rightmost leaf; such trees will be called \emph{admissible}. Label the root and the rightmost leaf by $t$, all other leaves by $\id$ and all internal edges by $s$. The following picture illustrates one such labelled tree. Define the map
$m_\Gamma \colon T^{i-1}\Sigma^{-1} V \otimes tM \to tM$ by composing labels in an obvious manner,
working from the leaves down to the root. For the tree pictured in Figure~\ref{planartree},
$$m_\Gamma = tm_3(\id^{\otimes 2}\otimes sm_2) (\id^{\otimes 3}\otimes sm_4)(\id^{\otimes 6} \otimes t).$$
We then put $({m^{tM}_i})^*=\sum m_\Gamma$, with the sum taken over all admissible trees. Note that no signs appear in this formula, since the action of the perturbation algebra $\hat{A}$ on $\hat{T}\Sigma V^*\otimes M^*$ dualises to
an action of its opposite dg algebra $\hat{A}^{\operatorname{op}}$ on $T\Sigma^{-1} V\otimes M$; the latter is then converted to an action of $\hat{A}$ via $\rho$.
Explicit formulae for the structure maps of the $A_\infty$-isomorphism
$g\colon (M,m)\cong (tM,tm(1+sm)^{-1}t)\oplus((1-t)M,d_M)$ and its inverse
can be obtained in a similar manner; we will refrain from giving details.

\end{document}